\newtheorem{Thm}{Theorem}
\newtheorem{Lem}[Thm]{Lemma}
\newtheorem{Prop}[Thm]{Proposition}
\theoremstyle{remark}
\theoremstyle{definition}
\newtheorem{Def}[Thm]{Definition}
\def\specialsection{\@startsection{section}{1}%
  \z@{\linespacing\@plus\linespacing}{.5\linespacing}%
%  {\normalfont\centering}}% DELETED
  {\normalfont}}% NEW
\def\section{\@startsection{section}{1}%
  \z@{.7\linespacing\@plus\linespacing}{.5\linespacing}%
%  {\normalfont\scshape\centering}}% DELETED
  {\normalfont\large\bfseries}}% NEW \scshape
\begin{document}

\author{Bumtle Kang}
\address{Department of Mathematics Education,
Seoul National University, Seoul 151-742, Korea}
\email{lokbt1@snu.ac.kr}

\author{Suh-Ryung Kim}
\address{Department of Mathematics Education,
Seoul National University, Seoul 151-742, Korea}
\email{srkim@snu.ac.kr}

\author{Boram Park}
\address{Department of Mathematics, Ajou University, Suwon 443-749, Korea}
\email{borampark@ajou.ac.kr}

\title{On the safe set of Cartesian product of two complete graphs}

\begin{abstract}  For a connected graph $G$, a vertex subset $S$ of $V(G)$ is a safe set if for every component $C$ of the subgraph of $G$ induced by $S$, $|C| \ge |D|$ holds for every component $D$ of $G-S$ such that there exists an edge between $C$ and $D$, and, in particular, if the subgraph induced by $S$ is connected, then $S$ is called a connected safe set.
 For a connected graph $G$, the safe number and the connected safe number of $G$ are the minimum among sizes of the safe sets and the minimum among sizes of the connected safe sets, respectively, of $G$.
 Fujita~{\it et al.} introduced these notions in connection with a variation of the facility location problem.
 In this paper, we study the safe number and the connected safe number of Cartesian product of two complete graphs. Figuring out a way to reduce the number of components to two without changing the size of safe set makes it sufficient to consider only partitions of an integer into two parts without which it would be much more complicated to take care of all the partitions.  In this way, we could show that the safe number and the connected safe number of Cartesian product of two complete graphs are equal and present a polynomial-time algorithm to compute them. Especially, in the case where one of complete components has order at most four, we precisely formulate those numbers.

\end{abstract}

\keywords{Safe set; Connected safe set; Safe number; Connected safe number; Cartesian product; Complete graph}
\subjclass[2010]{05C69}
\thanks{This work was supported by
the National Research Foundation of Korea(NRF) grant funded by the Korea government(MEST) (No.\ NRF-2015R1A2A2A01006885).}

\maketitle

\section{Introduction}
Fujita~{\it et al.}~\cite{FMS} introduced notions of safe set and connected safe set, motivated by the following problem.
For a given topology of a building, it is required to place temporary accident refuges in addition to business spaces like discussion of conference rooms. Each temporary refuge should be available for the staff in every adjacent business space. (To mitigate the space cost, we assume that each temporary refuge will be used by the people in at most one of the adjacent business space.) Subject to the topology of the building being given, how can the temporary refuges be efficiently located so that the amount of business spaces is maximized? For more recent work on this subject, the reader may refer to Bapat~{\it et al.}~\cite{2}.
 
Given a graph $G$ and a set $X$ of vertices in $G$, we denote by $G[X]$ the subgraph of $G$ induced by $X$. For a connected graph $G$, a set $S$ of vertices in $G$ is said to be a {\em safe set} if for every component $C$ of $G[S]$, $|C| \ge |D|$ holds for every component $D$ of $G-S$ such that there exists an edge between $C$ and $D$, and, especially, if $G[S]$ is connected, then $S$ is called a {\em connected safe set}. For a connected graph $G$, the {\em safe number} $s(G)$ of $G$ is defined as $s(G)=\min\{|S| \mid S \mbox{ is a safe set of }G\}$, and the {\em connected safe number} $cs(G)$ of $G$ is defined as $cs(G)=\min\{|S| \mid S \mbox{ is a connected safe set of }G\}$. See Figure~\ref{fig:safeset} for an illustration. Fujita~{\it et al.}~\cite{FMS} showed that for a graph $G$
\[s(G) \le cs(G) \le 2s(G)-1\]
and any tree $T$ with at most one vertex of degree at least three satisfies the equality $s(T) = cs(T)$. Other than this kind of trees, the complete graphs obviously satisfy the equality. In this regard, we thought that it would be interesting to study which graphs satisfy the equality and Cartesian products of complete graphs are good to start with.

 \begin{figure}
  \begin{center}
\psfrag{a}{\small $v_1$}\psfrag{b}{\small $v_2$}\psfrag{c}{\small $v_3$}
\psfrag{d}{\small $v_4$}\psfrag{e}{\small $v_5$}\psfrag{f}{\small $v_6$}
\psfrag{g}{\small $v_7$}\psfrag{h}{\small $v_8$}\psfrag{i}{\small $v_9$}
\psfrag{j}{\small $v_{10}$}\psfrag{k}{\small $v_{11}$}\psfrag{l}{\small $v_{12}$}
\psfrag{G}{$G$}
 %   \psfrag{E}{$\left\lfloor \frac{mn-1}{2} \right\rfloor$ vertices}
  \includegraphics[width=4.5cm]{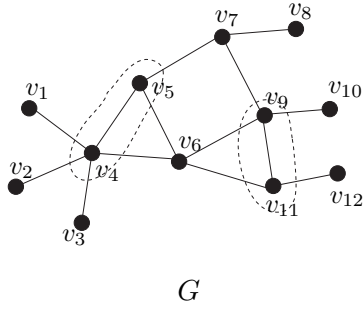}
  \end{center}
  \caption{A set $\{v_4,v_5,v_9,v_{11}\}$ is a safe set. However, $S=\{v_3,v_4,v_9,v_{11}\}$ is not a safe set as $G-S$ has the component with $4$ vertices $v_5,v_6,v_7,v_8$ even if each components of $G[S]$ has size two.} \label{fig:safeset}
\end{figure}

The Cartesian product $G_1 \Box G_2$ of two simple graphs $G_1$ and $G_2$ is a graph with vertex set $V(G_1) \times V(G_2)$ and having two vertices $(u_1,u_2)$ and $(v_1,v_2)$ adjacent if and only if
either $u_1=v_1$ and $u_2$ is adjacent to $v_2$ in $G_2$, or $u_2=v_2$ and $u_1$ is adjacent to $v_1$ in $G_1$.

By figuring out a way to reduce the number of components to two without changing the size of safe set, we shall show that for two integers $m,n \ge 1$, the safe number and the connected safe number of $K_m \Box K_n$ are the same, that is,
\[ s(K_m \Box K_n) = cs(K_m \Box K_n),\]
and go further to compute the exact safe number. By symmetry, we assume $m \le n$ without loss of generality. In addition, we mean by a component $C$ of a graph $G$ both the subgraph $C$ and the vertex set $C$.

%============================================================================================================
\section{Main Results}
%============================================================================================================
 We label the vertices of $K_m$ as $1$, $2$, $\ldots$, $m$ and  $K_n$ as $1$, $2$,$\ldots$, $n$ so that a vertex of $G$ is denoted by $(i,j)$ for some $i\in \{1,2,\ldots,m\}$  and $j\in \{1,2,\ldots,n\}$.

If $m=1$ or $2$, then the safe number and the connected safe number can rather easily be computed:
\begin{Prop}\label{prop:simple}
For any positive integer $n$, the following are true:
\begin{itemize}
\item[(i)] $cs(K_1 \Box K_n)=s(K_1 \Box K_n)=\left\lceil\frac{n}{2}\right\rceil$;
\item[(ii)] $cs(K_2 \Box K_n)=s(K_2 \Box K_n)=n$.
\end{itemize}
\end{Prop}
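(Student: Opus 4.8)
The plan is to treat the two items separately, each reducing to an elementary count once the product structure is made explicit.

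For (i), the key observation is that $K_1 \Box K_n$ is isomorphic to $K_n$, since the single vertex of $K_1$ forces $(1,j)$ and $(1,j')$ to be adjacent exactly when $j \neq j'$. In a complete graph every induced subgraph is again complete, hence connected, so for any nonempty $S$ both $G[S]$ and $G-S$ are single cliques, of sizes $|S|$ and $n-|S|$. When $|S| < n$ these two cliques are joined by an edge, so the safe-set condition is simply $|S| \ge n-|S|$, i.e.\ $|S| \ge \lceil n/2 \rceil$; conversely a clique of that size is a connected safe set. This pins down both $s$ and $cs$ at once and gives (i).

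For (ii), I would first fix the model: $K_2 \Box K_n$ consists of two copies of $K_n$ (the rows $i=1,2$) together with the perfect matching $\{(1,j)(2,j)\}_j$ (the columns). The upper bound is immediate, since taking $S$ to be one entire row gives a connected safe set with $|S| = n$ facing a single opposite component of size $n$, so $cs(K_2 \Box K_n) \le n$. For the lower bound I would classify each column $j$ by $a_j := |S \cap \{(1,j),(2,j)\}| \in \{0,1,2\}$, writing $x$, $y$, $z$ for the numbers of columns with $a_j = 2,1,0$ respectively, so that $x+y+z = n$ and $|S| = 2x+y$. The bound $|S| \ge n$ then follows from a dichotomy on the connectivity of $G-S$. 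If $G-S$ is connected, it is a single component $D$ of size $2n-|S|$; since $G$ is connected, $D$ is adjacent to some component $C$ of $G[S]$, and $|C| \le |S|$, so the safe-set inequality $|C| \ge |D|$ forces $2|S| \ge 2n$, that is $|S| \ge n$. If instead $G-S$ is disconnected, then because each row of $G-S$ is itself a clique the only way to break connectivity is to have both rows meet the complement while no column lies entirely outside $S$, i.e.\ $z=0$, whence $|S| = 2x+y = x+n \ge n$. Combining the two cases gives $s(K_2 \Box K_n) \ge n$, and together with the general inequality $s \le cs$ and the upper bound $cs \le n$ this yields $s = cs = n$.

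The step I expect to be the crux is the disconnected case: one has to recognize that, thanks to the clique structure of the rows, disconnecting $G-S$ is possible only by choosing $S$ as a transversal meeting every column---exactly the configuration that already forces $|S| \ge n$ without invoking the safe-set condition at all. The remaining work (the degenerate possibilities $S=V$ or one row of the complement being empty, and checking that the full-row set really is safe) is routine and does not affect the bound.
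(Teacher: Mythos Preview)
Your proposal is correct and follows essentially the same line as the paper's proof. For (ii) the paper packages the lower bound as a single pigeonhole contradiction (if $|S|<n$ then some column lies entirely in $G-S$, forcing $G-S$ to be connected and strictly larger than $|S|$) rather than your case split on the connectivity of $G-S$, but the key observation---that a full column in the complement makes $G-S$ connected---is identical.
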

\begin{proof}
We note that $K_1 \Box K_n$ is the complete graph $K_n$. Thus $K_1 \Box K_n-S$ is connected for a subset $S$ of $V(K_1 \Box K_n)$.  Hence, by the definition of safe set, (i) is immediately true.

Now we show (ii). For simplicity, we let $G=K_2 \Box K_n$. Suppose that $s(G) < n$ and let $S$ be a minimum safe set of $G$. Then $|S| <n$, so $|V(G-S)| > n$. By the Pigeon Hole Principle, there exists $j$, $1 \le j \le n$, such that $(1,j) \in V(G-S)$ and $(2,j) \in V(G-S)$. Since $(1,j)$ and $(2,j)$ are adjacent and each vertex in $G$ is adjacent to $(1,j)$ or $(2,j)$, $G-S$ is connected. However, $|V(G-S)| > |S|$, which contradicts the definition of safe set. Therefore $s(G) \ge n$. Since $\{(1,i) \mid 1 \le i \le n\}$ is a connected safe set of size $n$, we have
\[n \le s(G) \le cs(G) \le n\]
and so (ii) follows.
\end{proof}

From now on, we figure out the safe number and the connected safe number of $K_m \Box K_n$ for $n \ge m \ge 3$.
We denote by $G$ the graph $K_m \Box K_n$ for some integers $n \ge m \ge 3$ throughout this paper.

We first present the following useful proposition.
\begin{Prop}\label{prop:1comp} For $n \ge m \ge 3$, unless $m=n=3$, the following are true:
\begin{itemize}
\item[(i)] There exists a connected safe set of $K_m \Box K_n$ of size $\left\lceil\frac{mn-1}{2} \right\rceil$ that is also a vertex cut;
\item[(ii)] There exists a minimum safe set of $K_m \Box K_n$ that is a vertex cut.
\end{itemize}
\end{Prop}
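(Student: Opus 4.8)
\emph{The plan} is to prove (i) by an explicit construction and then deduce (ii) from (i) by a short counting argument. The first thing I would observe is that, \emph{once $G[S]$ is connected}, the safe-set condition is automatic: the unique component of $G[S]$ has size $|S|$, while, provided $G-S$ has at least two components, every component of $G-S$ has size at most $|V(G)|-|S|-1 = \lfloor\frac{mn-1}{2}\rfloor \le \lceil\frac{mn-1}{2}\rceil = |S|$. So for (i) it suffices to exhibit a \emph{connected} vertex set $S$ with $|S|=\lceil\frac{mn-1}{2}\rceil$ such that $G-S$ is disconnected; this $S$ is then simultaneously a connected safe set and a vertex cut.

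For the construction I would isolate the corner vertex $z=(m,n)$. Recall that in $G$ two vertices are adjacent exactly when they share their first or their second coordinate. Put all of row $m$ and column $n$ except $z$ into $S$, and let the remainder of $G-S$ be a connected block $B$ inside $Q=\{(i,j)\mid 1\le i\le m-1,\ 1\le j\le n-1\}$ with $|B|=\lfloor\frac{mn-1}{2}\rfloor$; set $S=V(G)\setminus(\{z\}\cup B)$. Then $\{z\}$ is isolated in $G-S$ and $B$ is a second component, so $G-S$ is disconnected, and $|S|=mn-1-|B|=\lceil\frac{mn-1}{2}\rceil$. I would choose $B$ as an initial segment of $Q$ filled column by column, which keeps both $B$ and $Q\setminus B$ connected; each cell of $Q\setminus B$ and the entire row-$m$ part of the frame are joined through shared columns, while the column-$n$ part of the frame is joined through a shared row, so $G[S]$ is connected. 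This argument needs $Q\setminus B\neq\emptyset$, i.e.\ $\lfloor\frac{mn-1}{2}\rfloor<(m-1)(n-1)$; a direct check shows this inequality holds for all $n\ge m\ge 3$ \emph{except} $m=n=3$, where $B$ would have to fill $Q$ entirely and the frame alone splits into two pieces. This is exactly the source of the exclusion in the statement.

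For (ii) I would take any minimum safe set $S$. If $G-S$ is disconnected, $S$ is already a vertex cut and we are done. Otherwise $G-S$ is a single component $D$; since $G$ is connected, $D$ is adjacent to some component $C$ of $G[S]$, and safeness yields $|S|\ge|C|\ge|D|=mn-|S|$, hence $|S|\ge\lceil\frac{mn}{2}\rceil$. If $mn$ is odd this is strictly larger than $\lceil\frac{mn-1}{2}\rceil$, contradicting minimality of $S$ against the safe set produced in (i); so in this case every minimum safe set is a vertex cut. If $mn$ is even, then $|S|\ge\frac{mn}{2}=\lceil\frac{mn-1}{2}\rceil$ forces $s(G)=\frac{mn}{2}$, and the connected safe set from (i), having size $\frac{mn}{2}=s(G)$, is itself a minimum safe set that is a vertex cut.

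\emph{The main obstacle} is the connectivity bookkeeping in (i): one must check that $G[S]$ remains connected for the chosen shape of $B$ (including the small boundary cases where $Q\setminus B$ degenerates to part of a single column) and verify cleanly that $m=n=3$ is the unique pair for which the construction breaks down. Everything in (ii) is then a routine parity-and-counting deduction from (i).
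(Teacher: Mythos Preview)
Your proposal is correct and follows essentially the same approach as the paper: isolate a single corner vertex as one component of $G-S$, place a block of size $\lfloor\frac{mn-1}{2}\rfloor$ as the other, verify that $G[S]$ is connected, and then deduce (ii) from (i) by the parity/counting argument you give. The only cosmetic differences are which corner is isolated and where the large block is placed (the paper takes $C_1=\{(1,1)\}$ and fills $C_2$ inside columns $2,\ldots,n$ row by row, whereas you take $z=(m,n)$ and fill $B$ inside $[m-1]\times[n-1]$ column by column); in both cases the unique obstruction to connectivity of $G[S]$ reduces to the inequality $(m-2)(n-2)>1$, explaining the exclusion $m=n=3$.
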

\begin{proof}
%We take $(1,1)$ as a trivial subgraph $C_1$.
%Now we construct a subgraph $C_2$ of $K_m \Box K_n$ induced by the following $\left\lfloor \frac{mn-1}{2}\right\rfloor$ vertices.
By the division algorithm, $\left\lfloor \frac{mn-1}{2}\right\rfloor = (n-1)q + r$ for some integers $q,r$ with $0 \le r < n-1$. Obviously $q<m$. Since $n-1  \le \left\lfloor \frac{mn-1}{2}\right\rfloor$ for $m \ge 3$, $q \ge 1$. Now we let $C_2$ be the subgraph of $K_m \Box K_n$ induced by \[
 \bigcup_{i=0}^{q-1} \{(m-i,2),\ldots,(m-i,n)\} \cup \{(m-q,n-r+1),\ldots,(m-q,n)\}. \]
 Then $|C_2| = (n-1)q + r= \left\lfloor \frac{mn-1}{2}\right\rfloor$.
  Now we take $(1,1)$ as a trivial subgraph $C_1$ as shown in Figure~\ref{fig:1comp}.
  \begin{figure}
  \begin{center}
  % Requires \usepackage{graphicx}
    \psfrag{1}{\small $1$}\psfrag{2}{\small $2$}\psfrag{n}{\small $m$}\psfrag{m}{\small $n$}\psfrag{A}{\small $C_1$}
  \psfrag{B}{\small $C_2$}\psfrag{C}{$\looparrowright$}\psfrag{D}{\small $S$}\psfrag{E}{$\left\lfloor \frac{mn-1}{2} \right\rfloor$ vertices}
  \includegraphics[width=4.5cm]{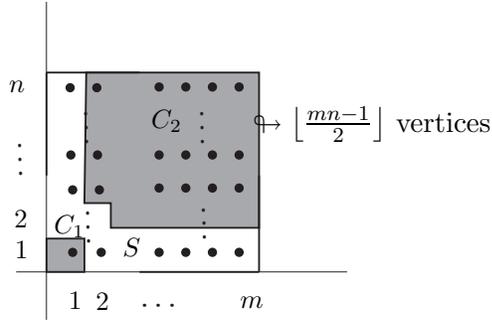}
  \end{center}
  \caption{An illustration of the components of $K_m \Box K_n-S$ mentioned in the proof of Proposition~\ref{prop:1comp}} \label{fig:1comp}
\end{figure}
  Then obviously $C_1$ and $C_2$ are the components of $G-S$ where $S=V(G)-\left(V(C_1) \cup V(C_2)\right)$. Moreover, since $n-r+1 \ge 3$, $(m-q,2) \in S$ and so $G[S]$ is connected. Thus $S$ is a connected safe set. Since $|S| = \left\lceil \frac{mn-1}{2} \right\rceil$, $|C_1| \le |S|$, and $|C_2| \le |S|$. Thus $S$ is a safe set of $K_m \Box K_n$. Hence the safe number of $K_m \Box K_n$ is less than or equal to $\left\lceil \frac{mn-1}{2} \right\rceil$ unless $m=n=3$.

To show (ii), take a minimum safe set $S$ of $K_m \Box K_n$ that is not a vertex cut. Then $G-S$ is connected. Let $C$ be a component of $G[S]$ one of whose vertices is joined to a vertex in $G-S$. Then $mn-|S| \le |V(C)|$. Since $|V(C)| \le |S|$, $mn-|S| \le |S|$, or $|S| \ge \frac{mn}{2}$. Since $|S|$ is an integer, $|S| \ge \left\lceil \frac{mn}{2} \right\rceil$. By (i), there exists a safe set $S^*$ with size $\left\lceil \frac{mn-1}{2} \right\rceil$ that is a vertex cut.
 If $\left\lceil \frac{mn-1}{2} \right\rceil < \left\lceil \frac{mn}{2} \right\rceil$, then $S$ cannot be a minimum safe set since $|S^*|<|S|$. Therefore $\left\lceil \frac{mn-1}{2} \right\rceil = \left\lceil \frac{mn}{2} \right\rceil$.
   Since $|S^*| \le |S|$, $S^*$ is a minimum safe set.  \end{proof}

Let $S$ be a vertex cut of $G$. By definition, any two vertices on the same row or any two vertices on the same column cannot be in distinct components in $G-S$. From this fact, we may make the following simple but very useful observation:
\[\begin{minipage}{0.85\textwidth} If $C$ is a component of $G-S$ and $(i,j)$ is a vertex in $C$, then a vertex in the $i$th column or in the $j$th row belongs to either $C$ or $S$.
\end{minipage} \tag{$\S$}\]
\begin{Def}
Let $C_1,\ldots,C_k$ be the components of $G-S$ for some vertex cut $S$ of $G$.  By the {\it component projection induced by $S$}, we mean the pair $\left(\Pi_1,\Pi_2 \right)$ of functions $\Pi_1:[k]\to 2^{[m]}$ and $\Pi_2:[k] \to 2^{[n]}$ defined as follows: for each $t\in [k]$,
\[
\Pi_1(t)=\{ i \mid (i,j)\in C_t\},\qquad \Pi_2(t)= \{ j \mid (i,j)\in C_t\}.
\]\label{def1}
\end{Def}
\noindent
Since any two vertices on the same row or any two vertices on the same column cannot be in the same component as noted above,
\begin{equation*}
 \Pi_1(s) \cap \Pi_1(t)=\emptyset \mbox{ and } \Pi_2(s) \cap \Pi_2(t)=\emptyset
 \label{disjoint}
 \end{equation*}
  for distinct $s$, $t$ in $[k]$. Thus we may assume that, for any vertex cut $S$ of $K_m \Box K_n$, the components $C_1$, $\ldots$, $C_k$ of $K_m \Box K_n-S$ satisfy
  the following properties throughout this paper:
\[\begin{minipage}{0.85\textwidth} \begin{itemize}
\item[(i)] $(1,1) \in \Pi_1(1) \times \Pi_2(1)$;
\item[(ii)] For $(i_1,j_1) \in C_t$ and $(i_2,j_2) \in C_t'$,  $i_1 < i_2$ and $j_1 < j_2$ if and only if $t<t'$.
     \end{itemize}
\end{minipage} \tag{$\ast$}
\]
 See Figure~\ref{fig:incl} for an illustration.  Moreover, by definition,
\begin{equation}
\left|\bigcup_{t\in[k]}\Pi_1(t)\right|=\sum_{t=1}^{k}  |\Pi_1(t)| \le m,\quad  \left|\bigcup_{t\in[k]}\Pi_2(t)\right|=\sum_{t=1}^{k}  |\Pi_2(t)| \le n.
\label{prop1}
\end{equation}

\begin{figure}
  \begin{center}
  % Requires \usepackage{graphicx}
    \psfrag{1}{\small $1$}\psfrag{2}{\small $2$}\psfrag{6}{\small $6$}\psfrag{7}{\small $7$}\psfrag{A}{$C_1$}
  \psfrag{B}{$C_2$}\psfrag{C}{$C_3$}
  \includegraphics[width=5cm]{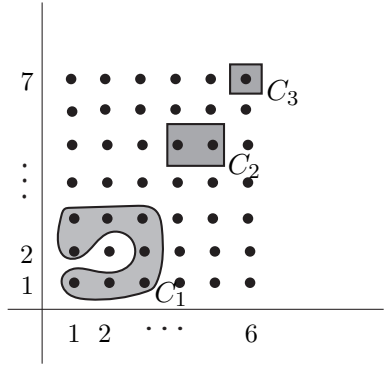}
  \end{center}
  \caption{An illustration of components of $K_6 \Box K_7-S$ arranged so that $(i_1,j_1) \in C_t$ and $(i_2,j_2) \in C'_t$, $i_1<i_2$ and $j_1<j_2$ if and only if $t<t'$ where $S$ is the set of vertices in the unshaded region. For these components, $\Pi_1(1)=\{1,2,3\}$, $\Pi_1(2)=\{4,5\}$, $\Pi_1(3)=\{6\}$, $\Pi_2(1)=\{1,2,3\}$, $\Pi_2(2)=\{5\}$, $\Pi_2(3)=\{7\}$ where $\left(\Pi_1,\Pi_2 \right)$ is the component projection induced by $S$.}\label{fig:incl}
\end{figure}
In this paper, for a vertex cut $S$ of $K_m \Box K_n$, we assume, unless otherwise mentioned, that the components of $K_m \Box K_n$ are arranged in this way.

By ($\ast$), it can easily be checked that, for each $t \in [k]$,
\begin{equation*}
\Pi_1(t) \times \Pi_2(t)=C_t \cup \left(S \cap \left(\Pi_1(t) \times \Pi_2(t)\right)\right)
\end{equation*}
or
\begin{equation}
 C_t= \left(\Pi_1(t) \times \Pi_2(t) \right) \setminus   \left(S \cap \left(\Pi_1(t) \times \Pi_2(t)\right)\right).
 \label{comp}
\end{equation}
For a graph $G$, we denote the number of components of $G$ by $\omega(G)$.

Suppose that $\omega(G-S) \ge 3$. Then there exist two points $(i,j)$ and $(i',j)$ not in $C_2$ for $i$, $i'$, $j$ satisfying $\min \Pi_2(2) \le j \le \max \Pi_2(2)$, $i < \min \Pi_1(2)$, $\max \Pi_1(2) < i'$. Then, by the definition of $K_m \Box K_n$, $(i,j)$ and $(i',j)$ are joined to connect the two regions $R^*$ and $R_*$. See Figure~\ref{fig:S}.
\begin{figure}
\psfrag{1}{\small $1$} \psfrag{2}{\small $2$} \psfrag{m}{$m$}\psfrag{n}{\small $n$}
\psfrag{A}{\small $C_1$}\psfrag{B}{\small $C_2$}\psfrag{C}{\small $C_t$}
\psfrag{a}{\small $R_*$}\psfrag{b}{\small $R^*$}
  \centering
  \includegraphics[width=5cm]{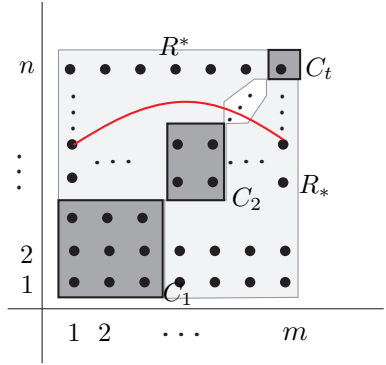}
  \caption{Two points connecting regions $R^*$ and $R_*$ when $t\ge3$}\label{fig:S}
\end{figure}

Therefore, in order for $G-S$ to be disconnected, $t=2$. Now suppose that there exists a point $(i,j)$ not in $C_t$ for some $t \in \{1,2\}$, $\min \Pi_1(1) \le i \le \max \Pi_1(t)$, $\min \Pi_2(t) \le j \le \max \Pi_2(t)$. We assume $t=1$. Then $(i,j)$ is joined to $(i,j')$ and $(i',j)$ for $\max \Pi_1(1) < i'$ and $\max \Pi_2(1) < j'$ to join the two regions $R^*$ and $R_*$. We obtain the same consequence even if $t=2$. Thus we have shown that if $G-S$ is disconnected, then $t=2$, $\Pi_1(1) \times \Pi_2(1)=C_1$, and $\Pi_1(2) \times \Pi_2(2)=C_2$. Therefore we obtain the following lemma:
\begin{Lem}\label{lem_connected}
Let $G = K_m \Box K_n$ for some integers $m,n\ge 1$.
Suppose that one of the following is true for a vertex cut $S$ of $G$:
 \begin{itemize}
 \item[(i)]
either $\sum_{t=1}^{k}  |\Pi_1(t)| < m $ or $\sum_{t=1}^{k} |\Pi_2(t)| < n$ where $k =\omega(G-S)$ and $(\Pi_1,\Pi_2)$ is the component projection induced by $S$;
 \item[(ii)] $\omega(G-S) \ge 3$.
 \end{itemize}
Then the subgraph $G[S]$ is connected.
\end{Lem}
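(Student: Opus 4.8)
The plan is to exploit the basic feature of $G=K_m \Box K_n$ that any two vertices sharing a first coordinate (a ``row'') or a second coordinate (a ``column'') are adjacent. Consequently, to prove that $G[S]$ is connected it suffices to produce a connected subset $T\subseteq S$ together with, for each remaining vertex of $S$, a neighbour inside $T$. I would treat the two hypotheses in turn. Under hypothesis~(i), say $\sum_{t=1}^k|\Pi_1(t)|<m$: by (\ref{prop1}) there is a row index $i_0\in[m]$ lying in no $\Pi_1(t)$, so by (\ref{comp}) (which gives $C_t\subseteq \Pi_1(t)\times\Pi_2(t)$) the vertex $(i_0,j)$ belongs to no $C_t$ for every $j$; that is, the whole row $\{(i_0,j)\mid j\in[n]\}$ lies in $S$. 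This row induces a clique, and every vertex $(i,j)\in S$ is joined to $(i_0,j)$ along column $j$, so $G[S]$ is connected. The subcase $\sum_{t=1}^k|\Pi_2(t)|<n$ is symmetric, using a free column.

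For hypothesis~(ii) I may assume that (i) fails, since otherwise the previous paragraph already gives the conclusion. Then $\bigcup_t\Pi_1(t)=[m]$ and $\bigcup_t\Pi_2(t)=[n]$, and because the sets $\Pi_1(1),\dots,\Pi_1(k)$ are pairwise disjoint and ordered as in~($\ast$), they partition $[m]$ into consecutive intervals; likewise the $\Pi_2(t)$ partition $[n]$. Thus $[m]\times[n]$ is split into a $k\times k$ array of blocks $\Pi_1(s)\times\Pi_2(t)$, $s,t\in[k]$. By (\ref{comp}) each diagonal block $\Pi_1(t)\times\Pi_2(t)$ contains $C_t$, while every off-diagonal block (with $s\neq t$) lies entirely in $S$, since disjointness in~($\ast$) prevents any of its vertices from belonging to any $C_{t'}$. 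Write $S_{\mathrm{off}}$ for the union of the off-diagonal blocks.

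Next I would prove that $G[S_{\mathrm{off}}]$ is connected, and this is where the assumption $k\ge 3$ enters decisively. Each individual block is a full combinatorial rectangle and is therefore connected in the rook adjacency; two off-diagonal blocks lying in a common row-group or column-group are joined through a shared row or column. Hence, regarding the off-diagonal blocks as the off-diagonal cells of a $k\times k$ array under ``shares a row or column'' adjacency, connectivity of $G[S_{\mathrm{off}}]$ follows once these cells form a connected region; for $k\ge 3$ they do (for instance $(1,2)-(3,2)-(3,1)-(2,1)-(2,3)-\cdots$ reaches every such cell), whereas for $k=2$ the two cells $(1,2),(2,1)$ are isolated from one another. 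Finally, any vertex of $S$ not already in $S_{\mathrm{off}}$ lies in some diagonal block, say $(i,j)\in(\Pi_1(t)\times\Pi_2(t))\setminus C_t$; choosing any $t'\neq t$ and $j'\in\Pi_2(t')$ yields $(i,j')\in S_{\mathrm{off}}$ adjacent to $(i,j)$ along row $i$. Thus every vertex of $S$ is in, or adjacent to, the connected set $S_{\mathrm{off}}$, so $G[S]$ is connected.

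The main obstacle I anticipate is the connectivity of $S_{\mathrm{off}}$ and pinning down that it holds precisely because $k\ge 3$; this is exactly the place where the analogous statement fails for a generic two-component cut, so the argument must genuinely use the third component. The reduction of~(ii) to the case ``(i) fails'' is the other point that needs care: it is what upgrades the disjoint ordered families $\Pi_1(t)$ and $\Pi_2(t)$ to honest interval partitions of $[m]$ and $[n]$, thereby guaranteeing that no row or column lies outside the block array and that the only vertices of $S$ to reconnect are the within-block ones handled above. I would finally check the degenerate configurations—singleton blocks and the case $m=n$—to be sure the interval/block description does not break down.
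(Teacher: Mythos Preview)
Your proposal is correct and rests on the same geometric idea as the paper: the off-diagonal rectangles $\Pi_1(s)\times\Pi_2(t)$ with $s\neq t$ lie entirely in $S$, and once $k\ge 3$ they form a connected backbone to which every remaining vertex of $S$ attaches along its row or column. The paper's argument is more picture-driven---it speaks of two regions $R^*$ and $R_*$ of $S$ that get bridged through a column passing over $C_2$ when $k\ge 3$, and likewise when some bounding box $\Pi_1(t)\times\Pi_2(t)$ contains a vertex of $S$---and it does not separate hypothesis~(i) out as cleanly. Your explicit $k\times k$ block array together with the observation that the rook graph on a $k\times k$ board minus its diagonal is connected precisely when $k\ge 3$ makes the same claim rigorous, and your ``free row/column'' treatment of case~(i) is a tidy direct argument that the paper leaves implicit.
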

We present a lemma which will play a key role throughout this paper.
\begin{Lem}\label{lem_2comp}
Let $G = K_m \Box K_n$ for some integers $n \ge m\ge 3$ with $n \ge 4$.
 Then there is a minimum safe set $S^*$ of $G$ satisfying $\omega(G-S^*) = 2$.\end{Lem}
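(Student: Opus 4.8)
The plan is to start from a minimum safe set that is a vertex cut, which exists by Proposition~\ref{prop:1comp}(ii), and to reduce its number of components one at a time until only two remain. Concretely, I would take a minimum safe set $S$ of $G$ that is a vertex cut and let $k=\omega(G-S)$; if $k=2$ we are done, so suppose $k\ge 3$. Since $n\ge 4$ we are outside the excluded case $m=n=3$, and because $k\ge 3$ Lemma~\ref{lem_connected}(ii) applies, so $G[S]$ is connected. As $S$ separates the pairwise non-adjacent components $C_1,\ldots,C_k$, every $C_t$ has a neighbor in $S$, so the safe-set condition collapses to the single inequality $|S|\ge\max_{t}|C_t|$. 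This reformulation is what makes the argument tractable: once $G[S]$ is connected, being a safe set is equivalent to the largest component of $G-S$ being no larger than the cut.

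The engine of the proof is a merging operation. Given two components $C_i$ and $C_j$, their row sets $\Pi_1(i),\Pi_1(j)$ and column sets $\Pi_2(i),\Pi_2(j)$ are pairwise disjoint, and the block $B=\left(\Pi_1(i)\cup\Pi_1(j)\right)\times\left(\Pi_2(i)\cup\Pi_2(j)\right)$ shares no row or column with any other $C_t$. I would replace $C_i$ and $C_j$ by a single connected subset $C^\ast\subseteq B$ of a prescribed size, returning the remaining cells of $B$ to the cut and leaving all other components untouched. By~(\S) this $C^\ast$ forms one new component isolated from the rest, so the resulting cut $S'$ satisfies $\omega(G-S')=k-1\ge 2$ and is again a vertex cut. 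Choosing $|C^\ast|=|C_i|+|C_j|$ keeps $|S'|=|S|$, and any room to take $|C^\ast|$ larger only makes $|S'|<|S|$. If $k-1\ge 3$, Lemma~\ref{lem_connected}(ii) again gives that $G[S']$ is connected, so safeness reduces to $|S'|\ge\max$ component, which holds as soon as $|C^\ast|\le|S'|$; the terminal case $k-1=2$ I would treat directly, invoking Lemma~\ref{lem_connected}(i) when a projection is deficient and comparing sizes explicitly otherwise. Since $|S'|\le|S|=s(G)$ and $S'$ is a safe set, $S'$ is itself a minimum safe set with one fewer component, so iterating the operation terminates at a minimum safe set $S^\ast$ with exactly two components.

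The step that carries all the weight is guaranteeing that \emph{some} mergeable pair exists, that is, two components with $|C_i|+|C_j|\le|S|$; without this the merged component $C^\ast$ cannot be made to fit under the cut (note $|B|=(a_i+a_j)(b_i+b_j)\ge a_ib_i+a_jb_j\ge|C_i|+|C_j|$, so the block always has room to host such a $C^\ast$, but the \emph{safeness} bound $|C^\ast|\le|S|$ is the binding constraint). This is a counting problem: from $\sum_{t=1}^{k}|C_t|=mn-|S|$, the bounds $|C_t|\le|S|$, and $k\ge 3$, I would show that if every pair violated the bound then all components would be large — each exceeding roughly $|S|/2$ — which forces $|S|$ below the true value of $s(G)$ and is therefore impossible. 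Establishing this lower bound on the minimum safe-set size, equivalently ruling out a configuration of three or more nearly-equal large components, is the main obstacle; once it is in place the remainder is bookkeeping on component sizes together with repeated appeals to Lemma~\ref{lem_connected}.
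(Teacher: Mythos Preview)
Your outline has a genuine gap at exactly the point you flag as ``the main obstacle.'' The averaging argument you sketch --- if every pair satisfied $|C_i|+|C_j|>|S|$ then summing over pairs gives $(k-1)\sum_t|C_t|>\binom{k}{2}|S|$, hence $mn-|S|>\tfrac{k}{2}|S|$ and $|S|<\tfrac{2mn}{k+2}\le \tfrac{2mn}{5}$ --- does not by itself contradict minimality of $S$: at this stage of the paper the only size information available is the \emph{upper} bound $s(G)\le\big\lceil\tfrac{mn-1}{2}\big\rceil$ from Proposition~\ref{prop:1comp}(i), and there is no lower bound of the form $s(G)\ge 2mn/5$ to play against. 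So the counting route stalls unless you import structural information beyond component sizes.

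The paper supplies that structural information through the projections and then merges in one shot rather than pairwise. From $|C_t|\le|\Pi_1(t)||\Pi_2(t)|$ and $\sum_t|\Pi_2(t)|\le n$ it shows that if every $|\Pi_1(t)|\le\lfloor m/2\rfloor$ then $|S|\ge\lceil mn/2\rceil$, contradicting Proposition~\ref{prop:1comp}(i); hence some component, say $C_1$, has $|\Pi_1(1)|\ge\lceil m/2\rceil$. It then slides all of $C_2,\ldots,C_k$ leftward into a single block $C_2^\ast$ so that $|S^\ast|=|S|$ and $\omega(G-S^\ast)=2$. The bound $|C_2^\ast|\le |S^\ast|$ comes directly from the projection inequality $m-|\Pi_1(1)|\le|\Pi_1(1)|$, not from any pairwise size hypothesis; and connectedness of $G[S^\ast]$ is guaranteed by Lemma~\ref{lem_connected}(i) because $C_2^\ast$ occupies only $\max_{j\ge2}|\Pi_2(j)|$ columns, so the total column projection is strictly below $n$ whenever $k\ge3$. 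Your iterative scheme, by contrast, never isolates a ``dominant'' component and leaves both the mergeability claim and the terminal $k-1=2$ connectedness check unresolved; the projection argument is what closes both gaps simultaneously.
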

\begin{proof} Let $S$ be a minimum safe set of $G$. By Proposition~\ref{prop:1comp}(ii), we may assume that $S$ is a vertex cut. Therefore $\omega(G-S) \ge 2$. Suppose that $k:=\omega(G-S) \ge 3$. Then $G[S]$ is connected by Lemma~\ref{lem_connected}. Let $C_1$, \ldots, $C_k$ be the components of $G-S$ and $(\Pi_1,\Pi_2)$ be the component projection induced by $S$.
%Let $(\Pi_1,\Pi_2) = ((\Pi_1(1),\ldots, \Pi_1(k)),((\Pi_2(1),\ldots, \Pi_2(k)))$, $k \ge 3$, contain all the components of $G-S$.
Suppose that $|\Pi_1(i)| \le \left\lfloor\frac{m}{2}\right\rfloor$ for all $i \in [k]$. Since $|C_i| \le \Pi_1(i)\Pi_2(i)$ for all $i$, \[
\sum_{i=1}^{k} |C_i| \le \sum_{i=1}^{k}| \Pi_1(i)||\Pi_2(i)|
                 \le \left\lfloor\frac{m}{2}\right\rfloor \sum_{i=1}^{k}|\Pi_2(i)| = \left\lfloor\frac{m}{2}\right\rfloor n \le  \left\lfloor\frac{mn}{2}\right\rfloor.\]
Since $mn = |S| + \sum_{i=1}^{k} |C_i|$, $|S| = mn- \sum_{i=1}^{k} |C_i| \ge mn -  \left\lfloor\frac{mn}{2}\right\rfloor = \left\lceil\frac{mn}{2}\right\rceil$. However, by Proposition~\ref{prop:1comp}(i), there is a safe set of size $\left\lfloor \frac{mn-1}{2} \right\rfloor$. Since $\left\lfloor \frac{mn-1}{2} \right\rfloor < \left\lceil\frac{mn}{2}\right\rceil$, $S$ is not a minimum and we reach a contradiction.  Thus there is $i \in [k]$ such that $\Pi_1(i) > \left\lfloor\frac{m}{2}\right\rfloor$, that is, $\Pi_1(i) \ge \left\lceil\frac{m}{2}\right\rceil$.

Without loss of generality, we may assume that $i=1$. Let $j^*_i$ denote the index of the leftmost column of vertices that belong to $C_i$ for $i=2$, $\ldots$, $k$. Now we form the set $C_2^*$ of vertices in the following way: Take the vertices of $C_2$. Then add a vertex in the $i$th row and the $(j-j^*_l+j^*_2)$th column whenever a vertex in the $i$th row and the $j$th column belongs to $C_l$ for some $l \in \{3,\ldots,k\}$ as shown in  Figure~\ref{fig:2compb}.
\begin{figure}
  \begin{center}
  % Requires \usepackage{graphicx}
    \psfrag{1}{\small $1$}\psfrag{2}{\small $2$}\psfrag{m}{\small $n$}\psfrag{n}{\small $m$}\psfrag{A}{$C_1$}
  \psfrag{B}{$C_2$}\psfrag{C}{$C_3$}\psfrag{D}{$C_k$}\psfrag{E}{$C_1$}\psfrag{F}{$C_2^*$}
  \includegraphics[width=8cm]{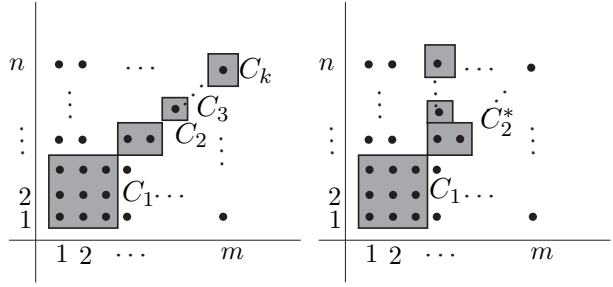}
  \end{center}
  \caption{An illustration of forming $C^*_2$ from $C_2$, $\ldots$, $C_k$}\label{fig:2compb}
\end{figure}
In this way, we obtain a vertex cut $S^*$ of $G$ such that $G-S^*$ consists of two components $C_1$ and $C_2^*$.
Then we can easily check that $|S| = |S^*|$ and $\omega(G-S^*)=2$. Let $(\Pi_1^*,\Pi_2^*)$ be the component projection induced by $S^*$. Then  $\left|\Pi_1^*(1)\right| = \left|\Pi_1(1)\right|$, $\left|\Pi_2^*(1)\right|=\left|\Pi_2(1)\right|$, $\left|\Pi_1^*(2)\right| =\left|\Pi_1(2)\right|$, and $\left|\Pi_2^*(2)\right| =\max_{2 \le j\le k}{\left|\Pi_2(j)\right|}$. Furthermore, since $k \ge 3$, $\left|\Pi_2(1)\right|+\max_{2 \le j\le k}{\left|\Pi_2(j)\right|} < \sum_{i=1}^{k} \left|\Pi_2(i)\right| = n$. Thus  $G[S^*]$ is connected by Lemma~\ref{lem_connected}(i). Hence $S^*$ is the only component the size of which is to be compared with $|C_i|$ for $i=1$, $\ldots$, $k$.

Now, since $S$ is a connected safe set, $|C_1| \le |S|=|S^*|$.
To show that $|C_2^*| \le |S^*|$, recall that $\Pi_1(1) \ge \left\lceil\frac{m}{2}\right\rceil$ and $\Pi_2(1) \ge \left\lceil\frac{n}{2}\right\rceil$ by our assumption.
Thus $\left(m-\left|\Pi_1(1)\right|\right) \le \left\lfloor \frac{m}{2} \right\rfloor \le \left|\Pi_1(1)\right|$ and therefore
%Thus $\left(m-\left|\Pi_1(1)\right|\right)\left(n-\left|\Pi_2(1)\right| \right) \le \left|\Pi_1(1)\right|\left|\Pi_2(1)\right|$ and therefore
\begin{align*}
|C_2^*|   &  \le \left(m-\left|\Pi_1(1)\right|\right)\left(\max_{2 \le j\le k}{\left|\Pi_2(j)\right|} \right)
            \le \left(m-\left|\Pi_1(1)\right|\right)\left(n-\Pi_2(1) \right) \\
            &\le  \left|\Pi_1(1)\right|\left(n-\left|\Pi_2(1)\right|\right) \le |S^*|.\end{align*}
Hence $S^*$ is a connected safe set of size $|S|$.
\end{proof}
\begin{Def} Given integers $m \ge n>1$,
we let
\[P_2(m,n)=\{\left((m_1,m_2),(n_1,n_2)\right) \mid m_1+m_2=m, n_1+n_2=n, m_i,n_i \in \mathbb{N}\},\]
where $\mathbb{N}$ is the set of positive integers.
%For any $\left((m_1,m_2),(n_1,n_2)\right) \in P_2(m,n)$, let \[\delta_2\left((m_1,m_2),(n_1,n_2)\right) :=mn-m_1m_2-n_1n_2.\]
\label{def:partition}
\end{Def}
%For notational convenience, for $(\pi_1,\pi_2) \in P_k(m,n)$,  we set \[\delta(\pi_1,\pi_2):=mn-\sum_{t=1}^{k} \pi_1(t)\pi_2(t)\] and \[M(\pi_1,\pi_2):=\max \{\pi_1(t)\pi_2(t) \mid t \in [k] \}.\]
\begin{Lem}\label{component_size}
For any $((m_1,m_2),(n_1,n_2)) \in P_2(m,n)$, there is at most one $j\in \{1,2\}$ which satisfies \[
mn-m_1n_1-m_2n_2 < m_jn_j.\]
%where $\delta_2\left((m_1,m_2),(n_1,n_2)\right)=mn-m_1n_1-m_2n_2$.
\end{Lem}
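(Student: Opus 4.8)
The plan is to first rewrite the quantity $mn - m_1 n_1 - m_2 n_2$ in a form that makes its structure transparent. Since $m = m_1 + m_2$ and $n = n_1 + n_2$, expanding the product $mn = (m_1+m_2)(n_1+n_2)$ and cancelling the two ``diagonal'' terms $m_1 n_1$ and $m_2 n_2$ gives the identity
\[
mn - m_1 n_1 - m_2 n_2 = m_1 n_2 + m_2 n_1.
\]
Consequently, the inequality to be analyzed becomes $m_1 n_2 + m_2 n_1 < m_1 n_1$ in the case $j=1$, and $m_1 n_2 + m_2 n_1 < m_2 n_2$ in the case $j=2$. I regard producing this factorization as the key step, since once the common left-hand side $m_1 n_2 + m_2 n_1$ is isolated, the rest is a short sign argument.

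Next I would argue by contradiction, supposing that both $j=1$ and $j=2$ satisfy the bound. From the $j=1$ inequality, subtracting $m_1 n_2$ from both sides yields
\[
m_2 n_1 < m_1 n_1 - m_1 n_2 = m_1 (n_1 - n_2).
\]
Because $m_2$ and $n_1$ are positive integers, the left-hand side is strictly positive, and since $m_1 > 0$ this forces $n_1 - n_2 > 0$, i.e.\ $n_1 > n_2$. Symmetrically, from the $j=2$ inequality I subtract $m_2 n_1$ to get $m_1 n_2 < m_2 (n_2 - n_1)$, and positivity of $m_1$ and $n_2$ forces $n_2 > n_1$. The two strict inequalities $n_1 > n_2$ and $n_2 > n_1$ cannot hold simultaneously, so at most one of $j=1$, $j=2$ can satisfy the bound, which is the assertion of the lemma.

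The one point that must be checked carefully is that all four numbers $m_1, m_2, n_1, n_2$ are genuinely positive, since every sign deduction above relies on it; this is exactly what the definition of $P_2(m,n)$ guarantees, as each coordinate is required to lie in $\mathbb{N}$. Beyond this, there is no real analytic obstacle: the entire argument rests on the factorization identity in the first step, after which the contradiction is immediate, and the reasoning is insensitive to whether $m \ge n$ or $m \le n$.
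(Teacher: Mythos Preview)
Your proof is correct and follows essentially the same approach as the paper: both arguments expand $mn = (m_1+m_2)(n_1+n_2)$ to obtain the identity $mn - m_1n_1 - m_2n_2 = m_1n_2 + m_2n_1$ and then use positivity of the $m_i,n_i$ to force a strict inequality between two of the parameters. The only cosmetic difference is that the paper assumes the $j=1$ inequality holds, deduces $m_1 > m_2$, and then directly verifies that the $j=2$ inequality fails, whereas you argue by contradiction and compare $n_1$ with $n_2$ instead; these are symmetric variants of the same idea.
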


\begin{proof} If $mn-m_1n_1-m_2n_2 \ge m_tn_t$ for all $t \in \{1,2\}$, then we are done.
Suppose that there is $j \in \{1,2\}$ such that $mn-m_1n_1-m_2n_2 < m_jn_j.$
Without loss of generality, we may assume $j=1$, that is,
\[mn-m_1n_1-m_2n_2 < m_1n_1\]
or
\[(m_1+m_2)(n_1+n_2)-m_1n_1-m_2n_2 < m_1n_1.\]
We simplify the above inequality to obtain
\[m_1n_2+(m_2-m_1)n_1<0.\]
Since $m_1n_2>0$ and $n_1>0$, we have $m_1>m_2$.
Now
\[mn-m_1n_1-m_2n_2-m_2n_2=(m_1+m_2)(n_1+n_2)-m_1n_1-m_2n_2-m_2n_2=(m_1-m_2)n_2+m_2n_1.\]
Since $m_1>m_2$, the right hand side of the second equality is positive. Therefore
\[mn-m_1n_1-m_2n_2>m_2n_2.\]
\end{proof}

\begin{figure}
\psfrag{A}{$C_1$} \psfrag{B}{$C_2$} \psfrag{C}{$S_2$} \psfrag{D}{$S_1$}  \psfrag{1}{\small $1$} \psfrag{2}{\small $2$} \psfrag{m}{\small $n$} \psfrag{n}{\small $m$}\psfrag{V}{$V$}
\begin{center}
\includegraphics[height=4.5cm]{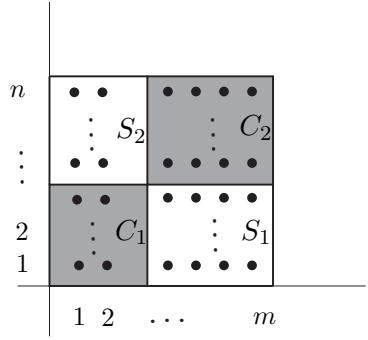}
\caption{$C_1$ and $C_2$} \label{fig:simple}
\end{center}
\end{figure}

In Figure~\ref{fig:simple}, suppose that $V(C_1)=[m_1] \times [n_1]$ and $V(C_2) = \left([m]\setminus[m_1]\right)\times \left([n]\setminus[n_1]\right)$.
Then the subgraph induced by $S:= S_1 \cup S_2$ is not connected.
By taking some vertices in $C_1$ or $C_2$ and adding them to $S$, we would like to obtain a connected safe set $S^*$. We denote the set of such vertices by $\Delta$. Then $|S^*|=mn-m_1n_1-m_2n_2 +|\Delta|$.
If $mn-m_1n_1-m_2n_2\ge \max\{m_1n_1,m_2n_2\}$, then we add just one vertex as we wish to have $S^*$ as small as possible. Otherwise, as long as $S^*$ has at least $\max\{m_1n_1,m_2n_2\}-|\Delta|$, $S^*$ is a safe set. That is, as long as $|S^*|=mn-m_1n_1-m_2n_2 +|\Delta| \ge \max\{m_1n_1,m_2n_2\}-|\Delta|$,
$S^*$ is a safe set.
Solving this inequality for $|\Delta|$ gives
\[|\Delta|\ge \frac{\max\{m_1n_1,m_2n_2\} - mn+\sum_{i=1}^2m_in_i}{2}.\]
%Since we would like to have $S^*$ as small as possible,
%adding $\left\lceil\frac{\max\{m_1n_1,m_2n_2\} - mn+\sum_{i=1}^2m_in_i}{2}\right\rceil$ vertices guarantees that $S^*$ is a safe set.
Motivated by this observation, we introduce the following notion.

\begin{Def} Given integers $m,n \ge 3$, we define
\begin{align*}
\alpha(m,n) &:= \min\left\{mn-\sum_{i=1}^2m_in_i +\max\left\{\left\lceil\frac{\max\{m_1n_1,m_2n_2\} - mn+\sum_{i=1}^2m_in_i}{2}\right\rceil ,1 \right\} \right\}
\end{align*}
where the minimum is taken for each $((m_1,m_2),(n_1,n_2)) \in P_2(m,n)$.
\end{Def}

Then the following is true.
\begin{Thm}\label{thm_min_connected}
Let $G = K_m \Box K_n$ for some integers $m,n\ge 3$.
Then \[cs(G) = \alpha(m,n).\]
\end{Thm}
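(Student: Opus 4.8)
My plan is to establish the two inequalities $cs(G)\le \alpha(m,n)$ and $cs(G)\ge \alpha(m,n)$ separately; throughout I assume $n\ge m\ge 3$, as we may by symmetry, and I treat the single exceptional case $m=n=3$ (where Lemma~\ref{lem_2comp} does not apply) by direct inspection, checking $\alpha(3,3)=5=cs(K_3\Box K_3)$.

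For the upper bound I realize each bracketed term of $\alpha(m,n)$ by an explicit connected safe set, exactly along the lines of the discussion around Figure~\ref{fig:simple}. Fix $((m_1,m_2),(n_1,n_2))\in P_2(m,n)$, put $C_1=[m_1]\times[n_1]$ and $C_2=([m]\setminus[m_1])\times([n]\setminus[n_1])$, and set $S=V(G)\setminus(C_1\cup C_2)$, so $|S|=mn-\sum_{i=1}^2 m_in_i$. Here $G[S]$ breaks into the two blocks $[m_1]\times([n]\setminus[n_1])$ and $([m]\setminus[m_1])\times[n_1]$, and every vertex of the \emph{larger} of $C_1,C_2$ is adjacent to both blocks. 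Hence, choosing $\Delta$ to be any $|\Delta|$ vertices of the larger block and setting $S^*=S\cup\Delta$ keeps $G[S^*]$ connected as soon as $|\Delta|\ge1$. Taking $|\Delta|=\max\{\lceil(\max\{m_1n_1,m_2n_2\}-mn+\sum_{i=1}^2 m_in_i)/2\rceil,1\}$, I will verify that each component of $G-S^*$ has at most $|S^*|$ vertices: the larger block shrinks to $\max\{m_1n_1,m_2n_2\}-|\Delta|\le|S^*|$ by the defining inequality for $|\Delta|$, while the smaller block, of size $\min\{m_1n_1,m_2n_2\}$, is bounded by $mn-\sum_{i=1}^2 m_in_i\le|S^*|$ thanks to Lemma~\ref{component_size}. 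One also checks $|\Delta|\le\max\{m_1n_1,m_2n_2\}$ so the removal is legitimate. Minimizing $|S^*|$ over $P_2(m,n)$ yields $cs(G)\le\alpha(m,n)$.

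For the lower bound I first reduce to two components. When $n\ge4$, Lemma~\ref{lem_2comp} furnishes a minimum safe set $S$ with $\omega(G-S)=2$, and its proof in fact produces $G[S]$ connected; since $s(G)\le cs(G)\le|S|=s(G)$, this $S$ is a minimum connected safe set and $cs(G)=|S|$. Let $C_1,C_2$ be its two components and write $m_1=|\Pi_1(1)|$, $n_1=|\Pi_2(1)|$, $m_2=m-m_1$, $n_2=n-n_1$; because a nonempty $C_2$ forces $\Pi_1(1)\subsetneq[m]$ and $\Pi_2(1)\subsetneq[n]$, this is a genuine element of $P_2(m,n)$, and $\Pi_1(2),\Pi_2(2)$ avoiding the rows and columns of $C_1$ gives $|C_1|\le m_1n_1$, $|C_2|\le m_2n_2$. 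It then suffices to show $|S|$ is at least the term of $\alpha(m,n)$ attached to this partition. Writing $p=\max\{m_1n_1,m_2n_2\}$ and $q=\min\{m_1n_1,m_2n_2\}$, that term equals $mn-p-q+\max\{\lceil(2p+q-mn)/2\rceil,1\}$, which simplifies to $mn-p-q+1$ when $2p+q-mn\le2$ and to $\lceil(mn-q)/2\rceil$ otherwise.

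In the first regime I will use connectivity: since $G[S]$ is connected, the characterization preceding Lemma~\ref{lem_connected} rules out the only disconnecting configuration (projections covering all rows and columns with $C_1,C_2$ filling their blocks), which forces $|C_1|+|C_2|\le p+q-1$ and hence $|S|=mn-|C_1|-|C_2|\ge mn-p-q+1$. In the second regime I will use safety: from $|S|\ge|C_1|$ and $|S|\ge|C_2|$ we get $2|S|\ge|S|+\max\{|C_1|,|C_2|\}=mn-\min\{|C_1|,|C_2|\}\ge mn-q$, using that the component lying in the smaller block has at most $q$ vertices, so $|S|\ge\lceil(mn-q)/2\rceil$. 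In either regime $|S|\ge\alpha(m,n)$, giving $cs(G)\ge\alpha(m,n)$. The hard part will be precisely this last step: seeing that the ``$+1$'' in $\alpha$ is forced exactly by connectedness (the two-full-blocks obstruction) while the ceiling term is forced exactly by the safe-set size inequality, so that the combinatorial formula matches the two structural constraints on the nose.
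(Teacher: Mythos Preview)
Your proposal is correct and follows essentially the same strategy as the paper: an explicit block construction for the upper bound, and a reduction to two components followed by size estimates for the lower bound. Your two-regime split of the lower bound is just a repackaging of the paper's uniform estimate $r(1)+r(2)\ge\max\{\lceil(2p+q-mn)/2\rceil,1\}$, since $r(1)+r(2)=p+q-|C_1|-|C_2|$; the paper's ``$r(1)+r(2)\ge 1$'' is exactly your connectivity observation $|C_1|+|C_2|\le p+q-1$, and the ceiling term is exactly your safety bound $2|S|\ge mn-q$.

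One small caveat: your assertion that the proof of Lemma~\ref{lem_2comp} ``in fact produces $G[S]$ connected'' is not literally true in every case. That proof only modifies the starting minimum safe set when $\omega(G-S)\ge 3$; if the starting set already has $\omega(G-S)=2$ it is returned unchanged, and it may then be disconnected (for instance, when $m=n=4$ the off-diagonal $2\times 2$ blocks form a minimum safe set with two components). The fix is immediate: in that residual case one checks (as in the proof of Theorem~\ref{thm_equal}) that necessarily $m_1=m_2$, $n_1=n_2$ and $|S|=mn/2$, whence the connected safe set of Proposition~\ref{prop:1comp}(i), which has size $\lceil(mn-1)/2\rceil=mn/2$ and $\omega=2$, is itself a minimum connected safe set with two components. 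With this patch (and your explicit treatment of $m=n=3$, which the paper glosses over), the argument is complete.
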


\begin{proof}
Let $S$ be a  minimum connected safe set of $G$. By Lemma~\ref{lem_2comp}, we may assume that $\omega(G-S)=2$. Let $C_1$ and $C_2$ be components of $G-S$.
%We first show that $|S| \ge  \min\{\alpha_1,(m,n),  \alpha_2(m,n) \}$.
By (\ref{prop1}),
\[  |\Pi_1(1)| +   |\Pi_1(2)| \le m,\quad   |\Pi_2(1)|+ |\Pi_2(2)| \le n\]
where $(\Pi_1,\Pi_2)$ is the component projection induced by $S$.
For notational convenience, let $|\Pi_1(1)|=m_1$, $m-|\Pi_1(1)|=m_2$, $|\Pi_2(1)|=n_1$, and $n-|\Pi_2(1)| = n_2$. Then
\[\left(\left(m_1,m_2\right),\left(n_1,n_2\right) \right) \in P_2(m,n).\]
In addition, we define $r(t)$ in the following way:
\[r(1)=\left|S \cap \left(\Pi_1(1) \times \Pi_2(1)\right)\right| \mbox{ and } r(2)=\left|S \cap \big[\left([m]\setminus\Pi_1(1)\right)\times \left([n]\setminus\Pi_2(1)\right)\big]\right|.\]
Then, by (\ref{comp}), $|C_1|=m_1n_1-r(1)$. Furthermore,
\begin{align*}
C_2&= \big[\left([m]\setminus \Pi_1(1)\right) \times \left([n]\setminus \Pi_2(1)\right)\big]\setminus S \\ &= \big[\left([m]\setminus \Pi_1(1)\right) \times \left([n]\setminus \Pi_2(1)\right)\big]\setminus \big[\left([m]\setminus\Pi_1(1)\right)\times \left([n]\setminus\Pi_2(1)\right)\big], %\label{long}
\end{align*}
so the equality $|C_2|=m_2n_2-r(2)$ also holds.
If $r(1) = r(2) = 0$, then $S$ is disconnected by Lemma~\ref{lem_connected}. Therefore one of $r(1)$ and $r(2)$ is at least $1$.
Note that if $\sum_{t=1}^{2}  |\Pi_1(t)| = m$ and  $\sum_{t=1}^{2}  |\Pi_2(t)| = n$, then $[n]\setminus  \Pi_2(1)=\Pi_1(2)$ and $[m]\setminus \Pi_1(1)=\Pi_2(2)$.
Thus
\begin{align}
|S|&=  mn-\sum_{t=1}^{2}  |C_t|=mn-\left(m_1n_1-r(1)\right)-\left(m_2n_2-r(2)\right) \notag \\
&=mn-m_1n_1-m_2n_2+r(1)+r(2). \label{eq:S}
\end{align}
By the definition of a connected safe set,
\[m_1n_1-r(1)=|C_1| \le |S| \quad \text{and} \quad
m_2n_2-r(2)=|C_2|\le |S|.\]
Therefore
\[
m_1n_1-r(1)\le mn-m_1n_1-m_2n_2+r(1)+r(2)\]
and\[
m_2n_2-r(2)  \le  mn-m_1n_1-m_2n_2  +r(1)+r(2).\]
Then\[
m_1n_1 - \left( mn-m_1n_1-m_2n_2\right)\le 2r(1)+r(2) \le 2(r(1)+r(2))\]
and\[
m_2n_2 - \left( mn-m_1n_1-m_2n_2\right) \le r(1)+2r(2) \le 2(r(1)+r(2)).\]
Therefore\[
\frac{1}{2}\left(\max\{m_1n_1,m_2n_2\} -\left( mn-m_1n_1-m_2n_2\right)\right) \le r(1)+r(2).\]
Since $r(1)$ and $r(2)$ are integers, \[
\left\lceil \frac{\max\{m_1n_1,m_2n_2\} -\left( mn-m_1n_1-m_2n_2\right)}{2}\right\rceil \le r(1)+r(2). \]
Furthermore, since one of $r(1)$ and $r(2)$ is at least $1$,\[
\max\left\{\left\lceil \frac{\max\{m_1n_1,m_2n_2\} -\left( mn-m_1n_1-m_2n_2\right)}{2}\right\rceil,1\right\} \le r(1)+r(2).
\]
Thus, by (\ref{eq:S}),
\begin{align*}
|S| &\ge mn-\sum_{i=1}^2m_in_i + \max\left\{\left\lceil \frac{\max\{m_1n_1,m_2n_2\} -\left( mn-\sum_{i=1}^2m_in_i\right)}{2}\right\rceil,1\right\} \\
    &\ge \alpha(m,n).\end{align*}
%
%
%By taking summands over $t\in [k]$ on each side of the above inequality, we obtain
%\[\sum_{t=1}^{k}  (\pi^*_1(t)\pi^*_2(t)-r(t))  \le kmn-k\sum_{t=1}^{k}  \pi^*_1(t)\pi^*_2(t)+k\sum_{t=1}^{k}  r(t)\]
%or
%\[\sum_{t=1}^{k}\pi^*_1(t)\pi^*_2(t)- \sum_{t=1}^{k}  r(t) \le \frac{k}{(k+1)}mn.\]
%This implies that
%\[|S| \ge \frac{mn}{(k+1)}.\]
%In addition, %\sum_{t=1}^{k}\frac{\sum_{t=1}^k\pi^*_1(t)\pi^*_2(t)-mn+\pi^*_1(t)\pi^*_2(t)}{k+1}=\sum_{t=1}^{k}\frac{(k+1)\pi^*_1(t)\pi^*_2(t)-mn}{k+1}=
%\[\sum_{t=1}^{k}\left(\pi^*_1(t)\pi^*_2(t)-\frac{mn}{k+1}\right) \le \sum_{t=1}^{k}r(t) .\]
%%Since $k \ge 1$,
%%\[\sum_{t=1}^{k}\left(\pi^*_1(t)\pi^*_2(t)-\frac{mn}{2}\right) \le \sum_{t=1}^{k}r(t) .\]
%Since $r(t)\ge 0$ for each $t\in [k]$,
%\[\sum_{t=1}^{k}\max\left\{0,\pi^*_1(t)\pi^*_2(t)-\left\lceil\frac{mn}{2}\right\rceil\right\} \le \sum_{t=1}^{k}\max\left\{0,\pi^*_1(t)\pi^*_2(t)-\frac{mn}{2}\right\} \le \sum_{t=1}^{k}r(t).\]
%and so
%\[ \min\left\{\sum_{t=1}^{k} \max\left\{\pi_1(t)\pi_2(t) - \left\lceil\frac{mn}{2}\right\rceil ,0\right\} \mid (\pi_1,\pi_2) \in P_k(m,n)\right\} \le \sum_{t=1}^{k}  r(t).\]

%
% mn-\sum_{t=1}^{k} \pi_1(t)\pi_2(t) +\min\left\{\sum_{t=1}^{k} \max\left\{\pi_1(t)\pi_2(t) - \left\lceil\frac{mn}{2}\right\rceil ,0\right\} \mid (\pi_1,\pi_2) \in P_k(m,n)\right\} %\\
%& \ge \min\left\{mn-\sum_{t=1}^{k} \pi_1(t)\pi_2(t)+\sum_{t=1}^{k}\max\left\{\pi_1(t)\pi_2(t) - \left\lceil\frac{mn}{2}\right\rceil ,0\right\} \mid (\pi_1,\pi_2) \in %P_k(m,n)\right\}.

\bigskip

Now we will show that there is a connected safe set with the size $\alpha(m,n)$.

%\begin{figure}
%  \begin{center}
%  % Requires \usepackage{graphicx}
%    \psfrag{1}{\small $1$}\psfrag{2}{\small $2$}\psfrag{m}{\small $m$}\psfrag{n}{\small $n$}\psfrag{A}{$D_1$}
%  \psfrag{B}{$D_2$}\psfrag{C}{$D_{\check{k}}$}
%  \includegraphics[width=5cm]{inc(axis).eps}
%  \end{center}
%  \caption{An illustration of $D_1$, $\ldots$, $D_{\check{k}}$.}\label{fig:inc}
%\end{figure}
%\medskip

Let $((m_1^*,m_2^*),(n_1^*,n_2^*))$ be an element of $P_2(m,n)$ that satisfies $\alpha(m,n)$, that is,
\[\alpha(m,n)= mn-\sum_{i=1}^2m_i^*n_i^* + \max\left\{\left\lceil \frac{\max\{m_1^*n_1^*,m_2^*n_2^*\} -\left( mn-\sum_{i=1}^2m_i^*n_i^*\right)}{2}\right\rceil,1\right\}.\]
In addition, we let
\[ D_1 =\{ (i,j) \in V(G)\mid  1\le i\le m_1^* \text{ and }   1\le j\le n_1^* \}\]
and
\[D_2 =\{ (i,j) \in V(G)\mid  m_1^*+1\le i\le m \text{ and }   n_1^*+1\le j\le n \}.\]
%(refer to Figure~\ref{fig:inc}).
Then $|D_i| = m_i^*n_i^*$ for each $i=1,2$.

For simplicity, for each $ t \in \{1,2\}$,
let $\nu_t$ be a nonnegative integer such that
\[ \nu_t = \left\lceil \frac{m_t^*n_t^* -\left( mn-\sum_{i=1}^2m_i^*n_i^*\right)}{2}\right\rceil. \]
Then, by Lemma~\ref{component_size}, there is at most one $\nu_t$ such that $\nu_t \ge 1$. Without loss of generality, we may assume that $\nu_2 \le 0$.

%We simply denote $D_t((m_1^*,m_2^*),(n_1^*,n_2^*))$ by $D_t$. Since $|D_1|=m_1^*n_t^* %\ge \nu_t$, there is a subset $D'_t$ of $D_t$ satisfying $|D'_t|=\nu_t$.

Suppose that $\nu_1 \le 0$. Then we let
\[S=\left( V(G)\setminus \left(D_1 \cup  D_2\right) \right)\cup \{(1,1)\}.\]
By definition, it is clear that $S$ is connected and any component of $G-S$ is contained in $D_1$ or $D_2$. Furthermore, \begin{align*}
|S| &=  mn-\sum_{i=1}^2m_i^*n_i^* + 1 \\
    &=  mn-\sum_{i=1}^2m_i^*n_i^* + \max\left\{\left\lceil \frac{\max\{m_1^*n_1^*,m_2^*n_2^*\} -\left( mn-\sum_{i=1}^2m_i^*n_i^*\right)}{2}\right\rceil,1\right\} \\
    &=\alpha(m,n). \end{align*}
Suppose that $G-S$ has two components $C_1$ and $C_2$ such that $C_t \subset D_t$ for each $t\in \{1,2\}$.
Since $\nu_1\le 0 $, $m_1^*n_1^* \le mn-\sum_{i=1}^2m_i^*n_i^*$, it holds that
\[ |C_1| \le |D_1| = m_1^*n_1^* \le  mn-\sum_{i=1}^2m_i^*n_i^*\le \alpha(m,n) = |S|.\]
Similarly, since $\nu_2 \le 0$,
\[ |C_2|\le |D_2| = m_2^*n_2^* \le   mn-\sum_{i=1}^2m_i^*n_i^* \le \alpha(m,n)= |S|.\]
Therefore $S$ is a safe set.

Now, suppose that $\nu_1 \ge 1$. By the division algorithm, there are integers $q,r$ such that $\nu_1 = m_1q+r$ with $0 \le r < m_1$. Let \[
D_1' = \bigcup_{i=1}^{q}  \{(1,i), \ldots, (m_1,i)\} \cup \{ (1,q+1), \ldots, (r,q+1)\}\]
where $\bigcup_{i=1}^{q} \{(1,i), \ldots, (m_1,i)\} = \emptyset$ if $q=0$. Note that $D_1' \subset D_1$ and $|D_1'|=\nu_1$.
Let \[
S=\left( V(G)\setminus  \left(D_1 \cup  D_2\right)\right) \cup D'_1.\]
Then\[|S| =  mn-\sum_{i=1}^2m_i^*n_i^* + \nu_1.\]
Since $\nu_2 \le 0$ and $\nu_1 \ge 1$,
\[\nu_1=\max\left\{\left\lceil \frac{\max\{m_1^*n_1^*,m_2^*n_2^*\} -\left( mn-\sum_{i=1}^2m_i^*n_i^*\right)}{2}\right\rceil,1\right\}.\]
Thus
\[|S|=\alpha(m,n).\]
By its construction, it is clear that $S$ is connected, one component of $G-S$ is contained in $D_1\setminus D'_1$, and the other component is contained in $D_2$.
Suppose that $G-S$ has two components $C_1$ and $C_2$ such that $C_1 \subset D_1\setminus D'_1$ and $C_2 \subset D_2$.
Since $\nu_2 \le 0$,
\[ |C_2|\le |D_2| = m_2^*n_2^* \le  mn-\sum_{i=1}^2m_i^*n_i^* \le  |S|.\]
Now \begin{align*}
&|C_1| \le |D_1\setminus D'_1|
    =m_1^*n_1^* - \nu_1
    \le \left(m_1^*n_1^* -2\nu_1\right) + \nu_1 \\
    &\le \left[m_1^*n_1^* -2 \cdot \frac{m_1^*n_1^* -\left( mn-\sum_{i=1}^2m_i^*n_i^*\right)}{2}\right] + \nu_1=mn-\sum_{i=1}^2m_i^*n_i^*+\nu_1
    = |S|.\end{align*}
Therefore $S$ is a safe set.\end{proof}

\begin{Thm}\label{thm_equal}
Let $G=K_m \Box K_n$ for some $n\ge m \ge 1$. Then $s(G) = cs(G)$.\end{Thm}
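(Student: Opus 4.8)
The plan is to lean on the universal bound $s(G) \le cs(G)$ recorded in the introduction, so that in every case it suffices to prove the reverse inequality $cs(G) \le s(G)$. The cases $m = 1$ and $m = 2$ are handled outright by Proposition~\ref{prop:simple}, which computes both quantities and finds them equal, so I would immediately restrict attention to $n \ge m \ge 3$. Within this range the only configuration escaping the machinery already built is $m = n = 3$ (excluded by the hypotheses of both Lemma~\ref{lem_2comp} and Proposition~\ref{prop:1comp}), which I would treat separately at the end; all remaining pairs satisfy $n \ge 4$.

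For $n \ge m \ge 3$ with $n \ge 4$, I would fix, via Lemma~\ref{lem_2comp}, a minimum safe set $S^*$ with exactly two components $C_1, C_2$ of $G - S^*$, and let $(\Pi_1,\Pi_2)$ be the induced component projection. The argument splits on whether $G[S^*]$ is connected. If it is connected, then $S^*$ is a connected safe set and $cs(G) \le |S^*| = s(G)$, finishing this case. The \emph{disconnected} case is the heart of the matter. Here Lemma~\ref{lem_connected}(i) forces $|\Pi_1(1)| + |\Pi_1(2)| = m$ and $|\Pi_2(1)| + |\Pi_2(2)| = n$, so the diagonal rectangles $\Pi_1(1) \times \Pi_2(1)$ and $\Pi_1(2)\times\Pi_2(2)$ are exactly $C_1$ and $C_2$; indeed no vertex of $S^*$ can lie inside a diagonal block, since a single such vertex shares a row with the off-diagonal block $A = \Pi_1(1)\times\Pi_2(2)$ and a column with $B = \Pi_1(2)\times\Pi_2(1)$ and would thereby reconnect $G[S^*]$. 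Thus $S^* = A \cup B$, its two components are $A$ and $B$, and each of $A, B$ is adjacent to both $C_1$ and $C_2$. Writing $m_1 = |\Pi_1(1)|$, $n_1 = |\Pi_2(1)|$, $m_2 = m - m_1$, $n_2 = n - n_1$, the four safe-set inequalities $|A| \ge |C_1|$, $|A| \ge |C_2|$, $|B| \ge |C_1|$, $|B| \ge |C_2|$ collapse to $m_1 = m_2$ and $n_1 = n_2$. Hence $m$ and $n$ are both even and $|S^*| = m_1 n_2 + m_2 n_1 = mn/2$. Since $mn$ is then even, Proposition~\ref{prop:1comp}(i) supplies a connected safe set of size $\lceil (mn-1)/2 \rceil = mn/2 = |S^*| = s(G)$, giving $cs(G) \le s(G)$ again.

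For the leftover case $m = n = 3$ I would argue directly. Theorem~\ref{thm_min_connected} gives $cs(K_3 \Box K_3) = \alpha(3,3) = 5$, so with $s \le cs$ it remains to exclude a safe set $S$ of size at most $4$. Such an $S$ must be a vertex cut, for otherwise $G - S$ is a single component of size at least $5$ adjacent to a component of $G[S]$ of size at most $4$. The block structure (each $|\Pi_i(t)| \ge 1$ with column- and row-sums at most $3$) leaves at most three components, and three would force all singletons, i.e. only three vertices outside $S$, too few; so $\omega(G - S) = 2$. A connected $G[S]$ would be a connected safe set of size below $5$, which is impossible, so Lemma~\ref{lem_connected}(i) forces the row- and column-projections each to split as $\{1,2\}$. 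A finite check of the four patterns for $(m_1,n_1)$ then shows that every such vertex cut either has $|S| \ge 5$ or leaves an off-diagonal block $A$ (or $B$) of size $2$ adjacent to a diagonal component of size $4$, violating safety. Hence $s(K_3 \Box K_3) = 5 = cs(K_3 \Box K_3)$.

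I expect the disconnected sub-case to be the main obstacle: the key realization is that disconnectedness of $G[S^*]$ is precisely the ``full partition with empty diagonal'' situation, and that the safe-set inequalities then pin the partition down to the balanced one, after which Proposition~\ref{prop:1comp}(i) produces a connected safe set of exactly the same size. The $(3,3)$ computation is elementary but is the one spot where an ad hoc enumeration cannot be avoided, since neither Lemma~\ref{lem_2comp} nor Proposition~\ref{prop:1comp} applies there.
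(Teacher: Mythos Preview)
Your argument is correct and follows the same core line as the paper's proof, with a few organizational differences worth noting. The paper does not invoke Lemma~\ref{lem_2comp} here at all: starting from a minimum safe set $S$ that is a vertex cut (Proposition~\ref{prop:1comp}(ii)), it uses the contrapositive of Lemma~\ref{lem_connected}(ii) directly to get $\omega(G-S)=2$ whenever $G[S]$ is disconnected, and then---rather than pinning down $m_1=m_2$, $n_1=n_2$ as you do---simply adds the inequalities $|S_1|\ge|C_1|,|C_2|$ and $|S_2|\ge|C_1|,|C_2|$ to obtain $mn=|C_1|+|S_1|+|S_2|+|C_2|\le 2(|S_1|+|S_2|)=2|S|$, hence $|S|\ge\lceil mn/2\rceil$. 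Since the paper has already recorded $cs(G)\le\lceil mn/2\rceil$ for \emph{all} $n\ge m\ge 3$ (including $m=n=3$, via the explicit connected safe set $\{(1,1),(1,2),(1,3),(2,1),(2,2)\}$), this closes every case at once and no separate $(3,3)$ enumeration is needed. Your sharper structural conclusion that the disconnected sub-case forces $m$ and $n$ both even is correct and pleasant, but the paper's softer inequality already suffices; conversely, your route buys a cleaner explanation of why the disconnected configuration is so constrained.
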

\begin{proof} By Proposition~\ref{prop:simple}, it is sufficient to consider the cases $n \ge m \ge 3$. It is obvious that $s(G) \le cs(G)$. We show that $cs(G) \le s(G)$. By Proposition~\ref{prop:1comp}, there is a connected safe set of size $\left\lceil\frac{mn-1}{2}\right\rceil$ unless $m=n=3$. It is easy to check that $\{(1,1),(1,2),(1,3),(2,1),(2,2)\}$ is a connected safe set of $K_3 \Box K_3$. Therefore $cs(K_3 \Box K_3)\le 5$.
Thus $cs(G) \le \left\lceil \frac{mn}{2} \right\rceil$ for $n \ge m \ge 3$.

Let $S$ be a minimum safe set of $G$. By Proposition~\ref{prop:1comp}, we may assume that $S$ is a vertex cut. If $G[S]$ is connected, then we are done. Now suppose that $S$ is not connected, that is, $\omega(G[S]) =t \ge 2$ for some nonnegative integer $t$. Then, by Lemma~\ref{lem_connected}, $\omega(G-S) \le 2$.
As we have shown that $cs(G) \le \left\lceil \frac{mn}{2} \right\rceil$, it is sufficient to show that $\left\lceil \frac{mn}{2} \right\rceil \le |S|$.
 Suppose that $\omega(G-S)=1$. Since $S$ is a safe set, $|G-S| \le |S|$. Then, since $|V(G)| = |G-S| +|S|$, $|V(G)| \le 2|S|$ or $\left\lceil\frac{mn}{2}\right\rceil \le |S|$.

Now suppose that $\omega(G-S)=2$. Then, by Lemma~\ref{lem_connected}, $|\Pi_1(1)| + |\Pi_1(2)| = m$, and $|\Pi_2(1)| + |\Pi_2(2)| = n$ where $(\Pi_1,\Pi_2)$ is the component projection induced by $S$.  Let $C_1$ and $C_2$ be the components of $G-S$. Then $G[S]$ has two components $S_1$ and $S_2$, and $|C_1|=|\Pi_1(1)||\Pi_2(1)|$, $|C_2|=|\Pi_1(2)||\Pi_2(2)|$,   $|S_1|=|\Pi_1(1)||\Pi_2(2)|$, $|S_2|=|\Pi_1(2)||\Pi_2(1)|$. See Figure~\ref{fig:simple} for an illustration. Moreover, there are edges joining a vertex in $C_1$ and a vertex in $S_1$, a vertex in $C_1$ and a vertex in $S_2$, a vertex in $C_2$ and a vertex in $S_1$, a vertex in $C_2$ and a vertex in $S_2$, respectively.

Therefore, by the definition of a safe set,
\[|S_1| \ge \max\{|C_1|, |C_2|\} \quad \text{and} \quad |S_2| \ge \max\{|C_1|, |C_2|\}. \] Then \begin{eqnarray*}
mn &=& (|\Pi_1(1)| + |\Pi_1(2)|)(|\Pi_2(1)| + |\Pi_2(2)|)\\
	&=& |\Pi_1(1)||\Pi_2(1)| + |\Pi_1(1)||\Pi_2(2)| + |\Pi_1(2)||\Pi_2(1)| + |\Pi_1(2)||\Pi_2(2)|\\
	&=& |C_1| + |S_1| +  |S_2| + |C_2| \\
	&\le& 2(|S_1|+|S_2|) \le 2|S|,\end{eqnarray*}
so $\frac{mn}{2} \le |S|$. Since $|S|$ is an integer, $\lceil \frac{mn}{2}\rceil \le |S|$ and we complete the proof.\end{proof}

From Theorem~\ref{thm_min_connected} and Theorem~\ref{thm_equal}, we immediately obtain our main result.
\begin{Thm}\label{thm:main}
For two integers $m\ge n\ge 1$,
\[s(K_m \Box K_n)=cs(K_m \Box K_n)=\alpha(m,n).\]
\end{Thm}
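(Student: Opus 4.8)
The plan is to obtain Theorem~\ref{thm:main} as a direct corollary of the two principal results already in hand, after first normalizing the order of the two factors. Since $K_m \Box K_n$ and $K_n \Box K_m$ are isomorphic graphs, their safe numbers and connected safe numbers coincide. Moreover, the swap $((m_1,m_2),(n_1,n_2)) \mapsto ((n_1,n_2),(m_1,m_2))$ is a bijection from $P_2(m,n)$ onto $P_2(n,m)$ under which the quantity minimized in the definition of $\alpha$ is invariant: it leaves $\sum_{i=1}^2 m_i n_i$, the product $mn$, and $\max\{m_1n_1,m_2n_2\}$ all unchanged. Hence $\alpha(m,n)=\alpha(n,m)$, and so it suffices to prove the theorem under the standing convention $n \ge m$ used throughout the paper.

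First I would dispose of the degenerate factors. If $m \le 2$, then Proposition~\ref{prop:simple} already supplies $s(G)=cs(G)$ together with the explicit value ($\lceil n/2 \rceil$ when $m=1$ and $n$ when $m=2$), so the equality $s(G)=cs(G)$ holds and its common value is read off directly. The substantive regime is therefore $n \ge m \ge 3$, which is precisely the range in which $\alpha(m,n)$ is defined.

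For $n \ge m \ge 3$ the argument is then a two-step chaining of equalities. Theorem~\ref{thm_equal} gives $s(G)=cs(G)$, and Theorem~\ref{thm_min_connected} gives $cs(G)=\alpha(m,n)$; composing these yields $s(G)=cs(G)=\alpha(m,n)$, which is the assertion of Theorem~\ref{thm:main}.

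I do not expect any genuine obstacle at this stage, because all of the real work has already been carried out upstream: Lemma~\ref{lem_2comp} reduces an arbitrary minimum safe set to one with exactly two components, and Theorem~\ref{thm_min_connected} matches the resulting lower bound against the explicit construction governed by the partition quantity $\alpha$. The only points needing care are the bookkeeping ones flagged above, namely verifying the symmetry $\alpha(m,n)=\alpha(n,m)$ so that the reduction to $n \ge m$ is legitimate, and confirming that the low-order cases $m \in \{1,2\}$ are handled through Proposition~\ref{prop:simple} rather than through the formula for $\alpha$, whose definition presupposes $m,n \ge 3$.
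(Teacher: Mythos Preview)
Your proposal is correct and follows essentially the same approach as the paper: the paper's proof is the one-line observation that the result follows immediately from Theorem~\ref{thm_min_connected} and Theorem~\ref{thm_equal}, which is precisely your two-step chaining. Your additional care about the symmetry $\alpha(m,n)=\alpha(n,m)$ and about the degenerate cases $m\le 2$ (where $\alpha$ is not defined and one must fall back on Proposition~\ref{prop:simple}) is appropriate bookkeeping that the paper glosses over.
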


The following is an algorithm for MATLAB computing $\alpha(m,n)$ in a polynomial time.

\begin{center}
\includegraphics[width=0.8\textwidth]{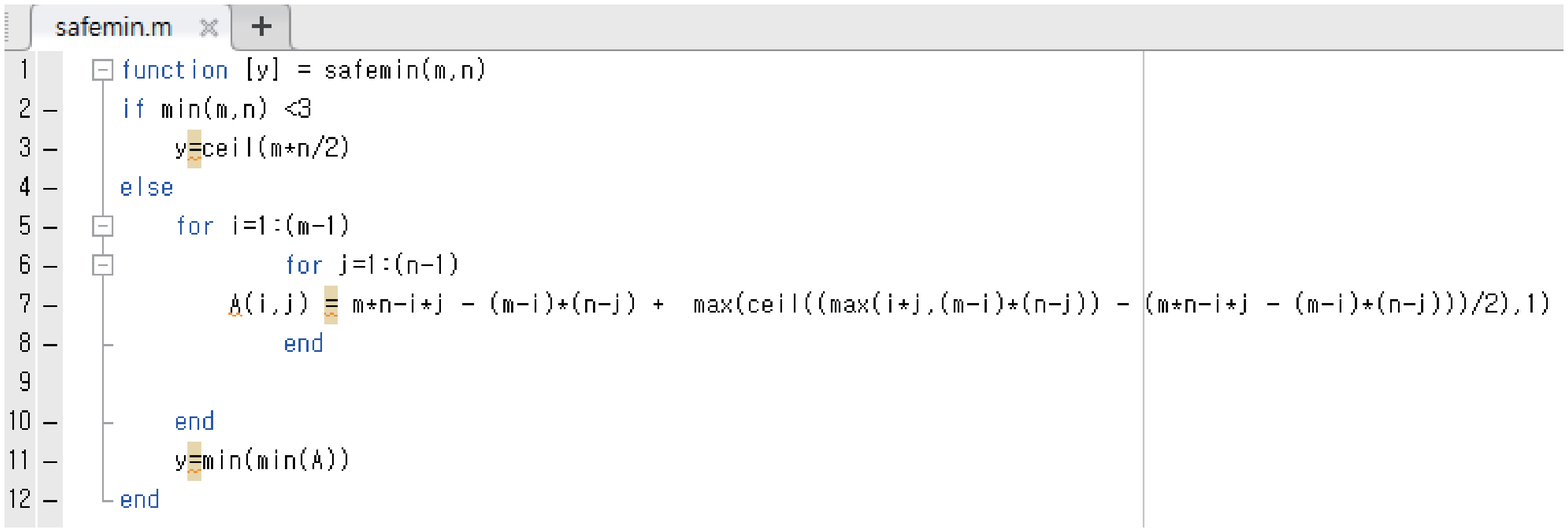}
\end{center}

The following are the values of $\alpha(m,n)$ for $n,m \le 10$ generated by the above algorithm.

\begin{center}
\includegraphics[width=0.7\textwidth]{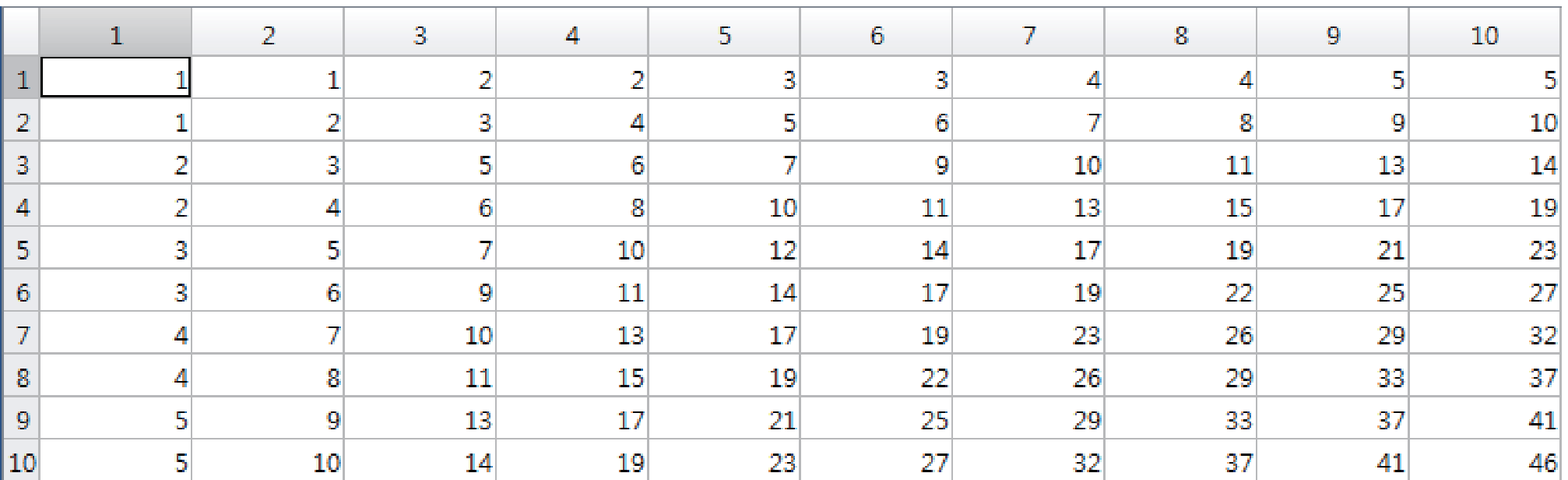}
\end{center}

\section{$\alpha(m,n)$ for $m \in \{3,4\}$, $n \ge m$ }

    For $m \in \{3,4\}$, $n \ge m$, we precisely formulate $\alpha(m,n)$.

\begin{Thm} The safe number of $K_m \Box K_n$ for $m \in \{3,4\}$, $n \ge m$ is as follows: \begin{itemize}
\item[(i)] If $m=3$, then \[
s(K_m \Box K_n)= n+ \left\lfloor\frac{n}{3} \right\rfloor +1;\]
\item[(ii)] If $m=4$, then\[
s(K_m \Box K_n) = n+ 4\cdot \left\lfloor\frac{n}{5}\right\rfloor +\max\{i,1\}\]
where $i \equiv n \pmod{5}$ for some $i$, $0 \le i \le 4$.
\end{itemize} \label{thm:precise34}
\end{Thm}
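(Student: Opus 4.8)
The plan is to invoke Theorem~\ref{thm:main}, which already gives $s(K_m \Box K_n) = \alpha(m,n)$, so the entire task reduces to evaluating $\alpha(m,n)$ explicitly for the two small fixed values $m=3$ and $m=4$. Since $m$ is fixed, $P_2(m,n)$ has only a few partition types for $(m_1,m_2)$: for $m=3$ the types $(1,2)$ and $(2,1)$, and for $m=4$ the types $(1,3)$, $(3,1)$, $(2,2)$. The first observation I would record is that the quantity minimized in the definition of $\alpha$ is invariant under simultaneously swapping $(m_1,m_2)$ and $(n_1,n_2)$, because this merely relabels the two blocks $C_1,C_2$. Hence it suffices to analyze $(m_1,m_2)=(1,2)$ for $m=3$, and $(m_1,m_2)=(1,3)$ together with $(m_1,m_2)=(2,2)$ for $m=4$, letting $n_1$ range over $\{1,\dots,n-1\}$ in each.

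Using the identity $mn-m_1n_1-m_2n_2 = m_1n_2+m_2n_1$, I would rewrite the inner expression as a function $g(n_1)$ of $n_1$ alone. The key is to split the range of $n_1$ into two regimes according to whether $\max\{m_1n_1,m_2n_2\}$ exceeds the separator size $m_1n_2+m_2n_1$. For $(m_1,m_2)=(1,2)$ this threshold is $n_1=n/3$ (where $2(n-n_1)=n+n_1$), and for $(m_1,m_2)=(1,3)$ it is $n_1=2n/5$ (where $3(n-n_1)=n+2n_1$). In the regime above the threshold the $\max\{\cdot,1\}$ equals $1$, so $g=(\text{separator})+1$ is increasing in $n_1$ and its minimum sits at the smallest admissible $n_1$; in the regime below the threshold the ceiling term is active, and a short parity computation comparing $\lceil x/2\rceil$ with $\lceil (x-c)/2\rceil$ (with $c=3$ for $m=3$ and $c=5$ for $m=4$) shows $g(n_1)\ge g(n_1+1)$, so $g$ is non-increasing and its minimum sits at the largest admissible $n_1$. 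In both cases the candidate minimizers lie at the threshold, giving two explicit boundary values whose minimum I would take.

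For $m=4$ I would additionally dispose of the type $(m_1,m_2)=(2,2)$: there the separator equals $2n$ and $\max\{2n_1,2n_2\}\le 2(n-1)<2n$ forces the ceiling to be non-positive, so this type contributes exactly $2n+1$ regardless of $n_1$. I would then check that the value produced by the $(1,3)$ family, which is of the form $\tfrac{9n}{5}+O(1)$, satisfies $\tfrac{9n}{5}+O(1)<2n+1$ for all $n\ge m$, so $(2,2)$ never attains the minimum and can be discarded. What remains is to compare the two boundary candidates inside the active type and simplify.

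The main obstacle I anticipate is the residue bookkeeping in this final simplification. The definition of $\alpha$ produces ceilings, whereas the claimed formulas $n+\lfloor n/3\rfloor+1$ and $n+4\lfloor n/5\rfloor+\max\{i,1\}$ are stated with floors, so reconciling the ``ceiling-active'' optimum with the ``$+1$'' optimum and checking that their minimum collapses to the stated floor expression requires a careful case split on $n \bmod 3$ and on $n \bmod 5$. In each residue class both candidates must be evaluated and the smaller one shown to equal the claimed value; this is routine arithmetic but error-prone, and it is the step on which I would concentrate the verification.
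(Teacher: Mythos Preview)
Your plan is correct and follows essentially the same route as the paper: both arguments invoke Theorem~\ref{thm:main} to reduce to computing $\alpha(m,n)$, use the swap symmetry to fix $(m_1,m_2)=(1,m-1)$, dispose of the $(2,2)$ type for $m=4$ by the $2n+1$ bound, and locate the optimum at the threshold $n_1\approx n/3$ (resp.\ $n_1\approx 2n/5$). The organizational difference is that the paper first exhibits the specific partitions $n_1=\lfloor n/3\rfloor$ and $n_1=2\lfloor n/5\rfloor$ to pin down the upper bound, and then argues the lower bound by taking an optimal partition and eliminating the off-threshold ranges \emph{by contradiction against that upper bound}; you instead minimize directly by proving the piecewise monotonicity of $g(n_1)$ via the parity estimate $\lceil x/2\rceil-\lceil(x-c)/2\rceil\ge\lfloor c/2\rfloor$. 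Your version is a little cleaner conceptually, while the paper's construction-first layout makes the final residue verification slightly shorter since the target value is already on the table; the substantive content is the same.
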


\begin{proof}
We first show that upper bounds of $s(K_m \Box K_n)$ are $n+ \left\lfloor\frac{n}{3} \right\rfloor +1$ and $n+ 4\cdot \left\lfloor\frac{n}{5}\right\rfloor +\max\{i,1\}$, respectively, for $m=3$ and $m=4$ where $i \equiv n \pmod{5}$ for some $i$, $0 \le i \le 4$.

Take $\left(\left(m_1,m_2\right),\left(n_1,n_2\right)\right) = \left(\left(1,2\right),\left(\left\lfloor\frac{n}{3} \right\rfloor, n- \left\lfloor\frac{n}{3} \right\rfloor\right)\right)$ from $P_2(3,n)$. Then\[
\sum_{i=1}^2 m_in_i = \left(\left\lfloor\frac{n}{3}\right\rfloor + 2\left(n- \left\lfloor\frac{n}{3}\right\rfloor \right) \right) = 2n - \left\lfloor\frac{n}{3}\right\rfloor,\]
\[
\max\{m_1n_1,m_2n_2\} = 2\left(n- \left\lfloor\frac{n}{3}\right\rfloor \right),\]
and so
\[
\Omega_3:=\max\left\{\left\lceil\frac{\max\{n_1,2n_2\} - n-n_1}{2}\right\rceil ,1 \right\} =\max\left\{\left\lceil\frac{2\left(n- \left\lfloor\frac{n}{3}\right\rfloor \right) - n - \left\lfloor\frac{n}{3}\right\rfloor}{2}\right\rceil ,1 \right\}=1.
\]
Therefore
\[\alpha(3,n) =3n-\sum_{i=1}^2m_in_i +\Omega_3 = n + \left\lfloor\frac{n}{3}\right\rfloor + 1.\]
 Hence\[
 s(K_3 \Box K_n) = \alpha(3,n) \le n + \left\lfloor\frac{n}{3}\right\rfloor + 1\]
  by Theorem~\ref{thm:main}.

%--------------------3&4
Now we take $\left(\left(m_1,m_2\right),\left(n_1,n_2\right)\right) = \left(\left(1,3\right),\left(2\cdot \left\lfloor\frac{n}{5} \right\rfloor, n- 2\cdot\left\lfloor\frac{n}{5} \right\rfloor\right)\right)$ from $P_2(4,n)$. Suppose that $n=5q + i$ for some $i$, $0 \le i \le 4$. Then\[
\sum_{i=1}^2 m_in_i = 2\cdot \left\lfloor\frac{n}{5} \right\rfloor + 3\left(n- 2\cdot\left\lfloor\frac{n}{5} \right\rfloor\right) = 3n - 4\cdot\left\lfloor\frac{n}{5} \right\rfloor,\]
\[
\max\{m_1n_1,m_2n_2\} = 3\left(n- 2\cdot\left\lfloor\frac{n}{5} \right\rfloor\right),\]
and so
\begin{align*}
\Omega_4&:=\max\left\{\left\lceil\frac{\max\{m_1n_1,m_2n_2\} - 4n+\sum_{i=1}^2m_in_i}{2}\right\rceil ,1 \right\} \\ &=\max\left\{\left\lceil\frac{3\left(n- 2\cdot\left\lfloor\frac{n}{5} \right\rfloor\right) - n - 4\cdot\left\lfloor\frac{n}{5} \right\rfloor}{2}\right\rceil,1\right\} =\max\{i,1\}.
\end{align*}
Therefore \[
\alpha(4,n) = 4n-\sum_{i=1}^2m_in_i +\Omega_4  = n + 4\cdot \left\lfloor\frac{n}{5}\right\rfloor + \max\{i,1\}.\]
 Hence\[
s(K_4 \Box K_n) = \alpha(4,n) \le n + 4\cdot \left\lfloor\frac{n}{5}\right\rfloor + \max\{i,1\}\]
 by Theorem~\ref{thm:main}.

%lower bound----------------------------------------------------------
 Now we show that $\alpha(3,n) \ge n+ \left\lfloor\frac{n}{3} \right\rfloor +1$ for $n \ge 4$.
 Let $\left((m_1,m_2),(n_1,n_2)\right) \in P_2(3,n)$ be a partition by which $\alpha(3,n)$ is achieved. That is, \[
 \alpha(3,n) = 3n-\sum_{i=1}^2m_in_i + \max\left\{\left\lceil\frac{\max\{n_1,2n_2\} - n-n_1}{2}\right\rceil ,1 \right\}.  \]

  Without loss of generality, we may assume that $m_1 = 1$ and $m_2 = 2$.
 Then
 \[3n-\sum_{i=1}^2m_in_i=3n - n_1 - 2n_2=n+n_1,\]
so \[\Omega_3:=\max\left\{\left\lceil\frac{\max\{n_1,2n_2\} - n-n_1}{2}\right\rceil ,1 \right\} \quad \text{and} \quad
\alpha(3,n) = n+n_1 +\Omega_3.\]

Suppose $n_1 \ge 2n_2$. Then $\max\{n_1,2n_2\}-n-n_1=-n<0$, so $\Omega_3=1$. Thus $\alpha(3,n)=n+n_1+1$. On the other hand, since $n_1+n_2=n$, $n_1 \ge 2n_2$ implies
$2n_2 \ge n$. Then
\[\alpha(3,n)=n+n_1+1 \ge n+\frac{n}{2}+1 > n+\left\lfloor \frac{n}{3} \right\rfloor+1,\]
and we reach a contradiction as we have shown that $\alpha(3,n) \le n+\left\lfloor \frac{n}{3} \right\rfloor+1$. Thus $n_1 < 2n_2$ and so $\max\{n_1,2n_2\}=2n_2$. Then
\[ 2n_2-n-n_1  = 2(n-n_1) - n -n_1
                         = n - 3n_1,\]
so $\Omega_3=\max\left\{\left\lceil \frac{n-3n_1}{2}\right\rceil,1\right\}$.

Suppose that $n-3n_1<2$. Then $n_1>\frac{n-2}{3}$ and $\Omega_3=1$. Since $n_1$ is an integer, $n_1>\left\lfloor \frac{n}{3} \right\rfloor$. Therefore
\[
 \alpha(3,n)= n+n_1 +1 > n + \left\lfloor\frac{n}{3} \right\rfloor + 1\]
and we reach a contradiction. Therefore $n-3n_1 \ge 2$ and so $\Omega_3=\left\lceil \frac{n-3n_1}{2}\right\rceil$. Thus
\[
  \alpha(3,n)
  = n+n_1+ \left\lceil \frac{n-3n_1}{2}\right\rceil = n+\left\lceil\frac{n-n_1}{2}\right\rceil
  \ge n + \left\lceil\frac{n+1}{3}\right\rceil. \]
Since $\left\lceil\frac{n+1}{3}\right\rceil \ge \left\lfloor\frac{n}{3} \right\rfloor +1$ for $n \ge 4$, we obtain  $\alpha(3,n) \ge n+ \left\lfloor\frac{n}{3} \right\rfloor +1$ from the above inequality. Since $\alpha(3,n) \le n+ \left\lfloor\frac{n}{3} \right\rfloor +1$, we conclude that $\alpha(3,n) = n+ \left\lfloor\frac{n}{3} \right\rfloor +1$.

In the following, we will show that $n+ 4\cdot \left\lfloor\frac{n}{5}\right\rfloor +\max\{i,1\} \le \alpha(4,n)$ where $i \equiv n \pmod{5}$ for some $i$, $0 \le i \le 4$.

Let $\left((m_1,m_2),(n_1,n_2)\right) \in P_2(4,n)$ be a partition by which $\alpha(4,n)$ is achieved, that is,
\[
\alpha(4,n) =4n-\sum_{i=1}^2m_in_i + \max\left\{\left\lceil\frac{\max\{m_1n_1,m_2n_2\}-n-2n_1}{2}\right\rceil ,1 \right\}. \]
Then either $\{m_1,m_2\} = \{2\}$ or $\{m_1,m_2\} = \{1,3\}$.
Assume $\{m_1,m_2\} = \{2\}$. Then \[
4n-\sum_{i=1}^2m_in_i=4n - 2n_1 - 2n_2=2n.\]
 Since $ \max\left\{\left\lceil\frac{\max\{2n_1,2n_2\}-n-2n_1}{2}\right\rceil ,1 \right\} \ge 1$, $\alpha(4,n) \ge 2n+1$. However, we have already shown that $\alpha(4,n) \le n+ 4\cdot \left\lfloor\frac{n}{5}\right\rfloor +\max\{i,1\}$, which is less than $2n+1$, and we reach a contradiction. Thus $\{m_1,m_2\} = \{1,3\}$.
 Without loss of generality, we may assume that $m_1=1$ and $m_2 = 3$. Then\[
4n-\sum_{i=1}^2m_in_i=4n - n_1 - 3n_2=n+2n_1\]
and  \[\Omega_4:= \max\left\{\left\lceil\frac{\max\{n_1,3n_2\}-n-2n_1}{2}\right\rceil ,1 \right\}.\]
If $n_1 \ge 3n_2$, then $\max\{n_1,3n_2\}-n-2n_1  = -n-n_1<0$, a contradiction. Therefore $3n_2 \ge n_1$ and so $\Omega_4= \max\left\{\left\lceil\frac{2n-5n_1}{2}\right\rceil ,1 \right\}$ since $n_1+n_2=n$.
If $2n-5n_1<2$, then $\Omega_4=1$ and so
\[\alpha(4,n)=n+2n_1+1> n+4\cdot \left\lfloor\frac{n}{5}\right\rfloor + \max\{i,1\},\]
which is a contradiction. Thus $2n-5n_1 \ge 2$. Then $n_1 \le \frac{2n-2}{5}$ and $\Omega_4=\left\lceil\frac{2n-5n_1}{2}\right\rceil$, so
\[\alpha(4,n)=n+2n_1+\left\lceil\frac{2n-5n_1}{2}\right\rceil=n+\left\lceil\frac{2n-n_1}{2}\right\rceil \ge n+ \left\lceil\frac{1+4n}{5}\right\rceil.\]
Since $i \equiv n \pmod{5}$, $n=5k+i$ for some integer $k$. Thus
\[\left\lceil\frac{1+4n}{5}\right\rceil = \left\lceil\frac{1+20k+4i}{5}\right\rceil
                                      = 4k + \left\lceil\frac{1+4i}{5}\right\rceil
                                      \ge 4\cdot \left\lfloor\frac{n}{5}\right\rfloor +  \max\{i,1\}\]
and so $\alpha(4,n) \ge n+ 4\cdot \left\lfloor\frac{n}{5}\right\rfloor +\max\{i,1\}$.
Thus $\alpha(4,n) = n+ 4\cdot \left\lfloor\frac{n}{5}\right\rfloor +\max\{i,1\}$
and we complete the proof.\end{proof}

\end{document}